%% file: compositionalityarticle-doc.tex
\documentclass[a4paper,onecolumn,superscriptaddress,10pt,unpublished]{compositionalityarticle}
\pdfoutput=1

\usepackage[utf8]{inputenc}
\usepackage[english]{babel}
\usepackage[T1]{fontenc}
\usepackage{amsmath}
\usepackage{hyperref}
\usepackage{tikz}
\usepackage[numbers,sort&compress]{natbib}

\input{misc/preamble}

\newcommand{\authorsforheader}{Pugh, Grundy, Cirstea and Harris}
\newcommand{\compositionalityIssue}{53}
\newcommand{\compositionalityVolume}{0}
\newcommand{\paperdoinumber}{10.46298/compositionality-\compositionalityVolume-\compositionalityIssue}
\newcommand{\paperdoi}{https://doi.org/\paperdoinumber}
\newcommand{\receiveddate}{yyyy-mm-dd}
\newcommand{\accepteddate}{yyyy-mm-dd}

\begin{document}

\title{Presenting Neural Networks via Coherent Functors}
\date{}
\author{Matthew Pugh}
\email{mtp47@cam.ac.uk}
\orcid{0009-0003-5410-5529}
\affiliation{University of Cambridge: Cambridge, England, GB }
\author{Jo Grundy}
\email{j.grundy@soton.ac.uk}
\orcid{0000-0003-2583-5680}
\affiliation{University of Southampton: Southampton, GB }
\author{Corina Cirstea}
\email{cc2@ecs.soton.ac.uk}
\orcid{0000-0003-3165-5678}
\affiliation{University of Southampton: Southampton, GB }
\author{Nick Harris}
\email{nrh@ecs.soton.ac.uk}
\orcid{0000-0003-4122-2219}
\affiliation{University of Southampton: Southampton, GB }
\maketitle

\input{sections/abstract}
\input{sections/introduction}
\input{sections/background}
\input{sections/datasets}
\input{sections/ml_models}
\input{sections/conclusion}

\bibliographystyle{plainnat}
\bibliography{misc/references.bib}

\end{document}

%% file: misc/preamble.tex
\usepackage{quiver}
\usepackage{amsmath}
\usepackage{amsfonts}
\usepackage{amssymb}
\usepackage{mathtools}
\usepackage{amsthm}
\usepackage{enumitem}
\usepackage[capitalize,noabbrev]{cleveref}

\theoremstyle{plain}
\newtheorem{theorem}{Theorem}[section]

\newtheorem{lemma}[theorem]{Lemma}
\newtheorem{corollary}[theorem]{Corollary}
\theoremstyle{definition}
\newtheorem{definition}[theorem]{Definition}

\theoremstyle{remark}

%% file: sections/abstract.tex
\begin{abstract}
This paper develops a methodology for representing machine learning models as models of formal theories, grounded in the perspective that machine learning models are a form of database and that databases are models of theories in coherent logic. Two intermediate results support this approach: any functorial database schema has an associated $\kappa$-coherent theory whose models coincide with its instances, and data may be hard-coded into a coherent category such that any model of the resulting theory necessarily contains it. These tools are used to show that any dense feed-forward neural network architecture over the floating point numbers may be presented as a coherent category $G$ whose $Set$-models are the networks of that architecture, with inference arising as the precomposition functor $Coh(\iota, Set)$ along a coherent functor $\iota : RSpan(a_0, a_n) \rightarrow G$. This representation is extended to networks with weight and bias fixing and tying, encompassing sparse and convolutional architectures, via a 2-coequaliser construction in $Coh_\sim$. Taken together, these results recast neural network inference as an extension problem in the 2-category $Coh_\sim$ of coherent categories, supporting the interpretation of a network architecture as a formal hypothesis about the structure of data and of model training as a lifting of a dataset into a more constrained theory.
\end{abstract}

%% file: sections/introduction.tex
\section{Introduction}

Recent work has demonstrated that any error minimisation problem may be represented as an extension problem within a suitable 2-category such that the left Kan extensions are, if they exist, the solutions to the error minimisation problem \cite{pugh_learning_2025}. The question of how one should represent machine learning models within category theory is an active area of research \cite{shiebler_category_2021, gavranovic_position_2024}. A clearer connection between the description of a machine learning problem and its representation as an extension in a 2-category would provide a new avenue for the investigation of machine learning algorithms.

This paper proposes a methodology for the representation of machine-learning algorithms as models of formal theories and demonstrates this approach by representing all feed-forward neural networks and their associated model inference as an extension problem in the category of coherent categories (Thm \ref{theorem:neural_network_representation}). While presenting concrete results at a low level, this work also supports an alternative higher level perspective of machine learning, whereby formal theories characterise the suitable datasets and machine learning models. The representation of machine learning inference via the precomposition of a coherent functor suggests that any machine learning model contains an internal representation of a dataset which is extracted through inference. It also represents the process of learning itself as a lifting or extensions of a dataset into a formal theory which adds additional constraints and assumptions. That is, a machine learning architecture is a hypothesis about the structure of a dataset and the model returned via training is the best representation of the data under these additional constraints. The 2-morphisms of $Coh_\sim$, as natural isomorphisms, are not immediately suited to realising the Kan extension as a loss-minimising model in the sense of \cite{pugh_learning_2025}; whether this may be achieved by appending suitable 2-morphisms or restricting to an appropriate sub-2-category is identified as a direction for future work.

It is shown that a simple description of databases as functors \cite{spivak_functorial_2013} can be lifted into a representation using $\kappa$-coherent categories (Thm \ref{theorem:schema}), indicating that the representation of databases as functors on small categories is equivalent to models of theories of some fragment of infinitary coherent logic. It is then shown that known information encoded in some simple database structure may be hard-coded into a theory such that any model of this theory contains the original database (Thm \ref{theorem:hard_coding}).

The hard coding theorem is included as a shortcut for constructing theories of machine learning models. It allows necessary data to be hard-coded into a coherent theory where subsequent model constraints may be introduced through 2-colimits of coherent categories. This approach is used to show that dense neural networks and networks with fixed and tied weights (such as convolutional architectures) may be presented as an extension along coherent functors (Thm \ref{theorem:neural_network_representation} and Thm \ref{corollary:weight_tying}).

The paper is structured as follows. Section \ref{section:background} introduces the relevant background on $\kappa$-coherent logic, syntactic categories, and the relationship between coherent theories and their models. Section \ref{section:datasets} demonstrates how functorial database schemas lift to coherent theories and develops the Hard Coding theorem. Section \ref{section:mlmodels} applies these tools to construct coherent theories of dense neural network architectures and their inference functors.

%% file: sections/background.tex
\section{Background}
\label{section:background}

It is assumed that the reader will have a basic knowledge of category theory but possibly not of $\kappa$-coherent theories and their respective categories. The following section will provide a brief introduction of the relevant definitions and theorems as can be found in the following references \cite{makkai_first_1977, kanalas_21-category_2021, mac_lane_sheaves_1994}.

The family of $\kappa$-coherent logics are distinguished by what sizes of disjunctions (unions) they allow. $\kappa$ is a regular cardinal such that for any set of predicates where the size of the set is less than $\kappa$, then the disjunction of this collection of predicates must exist in a $\kappa$-coherent logic. Consequently, many of the deductions of the following section are sensitive to the relative sizes of the sets involved. Definitions for the relevant terms may be found as follows.

\begin{definition}[$Set_{<\kappa}$]
Given a cardinal number $\kappa$ (An isomorphism class of sets) then $Set_{<\kappa}$ is the category of sets with cardinality strictly less than $\kappa$.
\end{definition}

\begin{definition}[$Set_{\leq\kappa}$]
Given a cardinal number $\kappa$ (An isomorphism class of sets) then $Set_{\leq\kappa}$ is the category of sets with cardinality less than or equal to $\kappa$.
\end{definition}

\begin{definition}[$\kappa$ regular cardinal]
A regular cardinal is a cardinal number $\kappa$ such that $Set_{<\kappa}$ has all colimits of diagrams whose indexing category has a set of morphisms with cardinality strictly less than $\kappa$.
\end{definition}

\begin{definition}[$\kappa$-small set]
A set is $\kappa$-small if it has cardinality strictly less than $\kappa$.
\end{definition}

The paper's overarching concept is to use the relationship between theories, categories, and databases to make intuitions about the behaviour of machine learning models more precise. Beyond what is presented in this work, there exists a relationship between type theories in general and their syntactic categories \cite{pitts_categorical_2001, schreiber_syntactic_2015} which indicates that this perspective has a powerful generality. However, for the purposes of this paper, it is sufficient to focus only on the definitions and theorems relevant to the $\kappa$-coherent category. As such, the following definitions will be presented with a focus on this subset of logic.

A $\kappa$-coherent logic makes use of distinguished symbols which represent sorts, predicates, and operations, which may be combined through the use of logic connectives ($\vee$, $\wedge$, $\bot$, $\top$, $\exists$) to produce $\kappa$-coherent formulae.

\begin{definition}[Signature]
A signature $L$ has three classes of symbols $\langle S, P, F \rangle$. $S$ is a set of sorts $s \in S$ (also called types). $P$ is a set of finitary sorted predicate symbols $p \in P$ sometimes written as $p \subseteq s_1 \times ... \times s_n$. $F$ is a set of finitary sorted operation symbols $f \in F$ often written as $f : s_1\times ... \times s_n \rightarrow s_o$.
\end{definition}

\begin{definition}[Free variable]
A free variable $x$ of sort $s$ may be written as $x:s$ and indicates where, in an expression, a term of the same sort may be substituted.
\end{definition}

\begin{definition}[Terms of sort $s$]
A term $t$ of sort $s$ may be written as $t:s$. A term may be a free variable, a constant, or may be formed from an operation symbol $f: s_1\times ... \times s_n \rightarrow s_o$ and terms $t_i:s_i$ written as $f(t_1,...,t_n):s_o$.
\end{definition}

\begin{definition}[Atomic Formulae]
Given a signature $L$, if $p\subseteq s_1 \times ... \times s_n$ is a predicate symbol then $p(t_1, ... , t_n$) is an atomic formula for terms $t_i:s_i$. If $t$ and $t'$ are terms of the same sort, then $t = t'$ is an atomic formula. The symbols $\top$ and $\bot$ are atomic formulas corresponding to the identically true and false formulas.
\end{definition}

\begin{definition}[$\kappa$-Coherent Formulae]
Given a signature $L$, The class $\chi_\kappa$ of $\kappa$-coherent formulae over $L$ contains:
\begin{itemize}[noitemsep,topsep=-8pt]
   \item All atomic formulae of $L$.
   \item The conjunction $\bigwedge\Theta$ for $\Theta \subseteq \chi_\kappa$ where $\Theta$ is finite.
   \item The disjunction $\bigvee\Theta$ for $\Theta \subseteq \chi_\kappa$ where $\Theta$ is $\kappa$-small.
\end{itemize}
\end{definition}

\begin{definition}[Infinitary-Coherent Formula]
An infinitary-coherent formula is a $\kappa$-coherent formula for some choice of $\kappa$.
\end{definition}

The collection of $\kappa$-coherent formulae represents the statements that can be made within a $\kappa$-coherent logic. A theory within this language dictates the implication relation between subsets of formulae.

\begin{definition}[$\kappa$-Coherent Context]
A $\kappa$-coherent context $\Phi$ is a finite subset of $\chi_\kappa$.
\end{definition}

\begin{definition}[$\kappa$-Coherent Sequent]
A $\kappa$-coherent sequent is an ordered pair of $\kappa$-coherent contexts ($\Phi$ and $\Psi$) written as $\Phi \vdash \Psi$. It corresponds to the statement that for any substitution of variables, if $\Phi$ is true then $\Psi$ is true.
\end{definition}

\begin{definition}[$\kappa$-Coherent Theory]
A $\kappa$-coherent theory is a set of $\kappa$-coherent sequents
\end{definition}

A model of a theory in a category represents the coherent formulae used by a given theory as diagrams and (co)limits of the category such that any sequent of the theory is valid within the interpretation in the category. This ensures that the model always fulfils the constraints imposed by the theory.

\begin{definition}[$C$-structure of type $L$]
Given a signature $L$ and a category $C$ which has all finite limits, then a $C$-structure of type $L$ is a mapping $M$, that sends sorts $s$ of $L$ to objects $M(s)$ of $C$, sends predicate symbols $p\subseteq s_1 \times... \times s_n$ to sub-objects (monomorphisms) $M(p): X \rightarrowtail M(s_1) \times ... \times M(s_n)$ of $C$, and sends operation symbols $f : s_1 \times ... \times s_n \rightarrow s_o$ to morphisms $M(f) : M(s_1)\times ... \times M(s_n) \rightarrow M(s_o)$ of $C$. 
\end{definition}

\begin{definition}[Interpreting formulas of $C$-structures]
\label{def:interpretation}
Given a $C$-structure $M$, a formula of $M$ may be interpreted as follows.
\begin{itemize}[noitemsep,topsep=-8pt]
   \item A finite sequence of free variables $\vec{x} = (x_1,...,x_n)$ of sorts $x_i:s_i$ is interpreted as $M(x_1,...,x_n) = M(\vec{x})=M(s_1)\times...\times M(s_n)$.
   \item A term $t(\vec{x}):s$ may be interpreted as a morphism $M_{\vec{x}}(t) : M(\vec{x}) \rightarrow M(s)$ such that: If $t=x_i$ then $M_{\vec{x}}(t)$ is the corresponding projection morphism of the product construction. If $t = f(t_1,...,t_n)$, then $M_{\vec{x}}(t)$ is the composite $M(f)\langle M_{\vec{x}}(t_1),..., M_{\vec{x}}(t_n) \rangle$.
   \item A formula over free variables $\phi(\vec{x})$ is interpreted as a subobject $M_{\vec{x}}(\phi) : X \rightarrowtail M(\vec{x})$
   \item The atomic formula $t_1 = t_2 : s$ is interpreted as the equaliser (limit of the diagram of parallel morphisms) of $M_{\vec{x}}(t_1)$ and $M_{\vec{x}}(t_2)$.
\[\begin{tikzcd}
	X && {M(\vec{x})} && {M(s)}
	\arrow["{M_{\vec{x}}(t_1=t_2)}", tail, from=1-1, to=1-3]
	\arrow["{M_{\vec{x}}(t_1)}", shift left=2, from=1-3, to=1-5]
	\arrow["{M_{\vec{x}}(t_2)}"', shift right=2, from=1-3, to=1-5]
\end{tikzcd}\]
\item A predicate with substituted terms $p(\vec{t})$ for $t_i : s_i$ is given by the following pullback diagram.
\[\begin{tikzcd}
	{X'} & {M(\vec{x})} \\
	X & {M(s_1)\times...\times M(s_n)}
	\arrow["{M(p(\vec{t}))}", tail, from=1-1, to=1-2]
	\arrow[from=1-1, to=2-1]
	\arrow["\ulcorner"{anchor=center, pos=0.125}, draw=none, from=1-1, to=2-2]
	\arrow["{\langle M_{\vec{x}}(t_1),...,M_{\vec{x}}(t_n)\rangle}", from=1-2, to=2-2]
	\arrow["{M(p)}"', tail, from=2-1, to=2-2]
\end{tikzcd}\]
\item The conjunction of formulae is the infimum of their corresponding sub-objects, .i.e a pullback along the monomorphisms $M_{\vec{x}}(\bigwedge \Theta)=\bigwedge\{M_{\vec{x}}(\phi)\ |\ \phi \in \Theta\}$
\item The disjunction of formulae is the supremum of their corresponding sub-objects, i.e. a co-product in the category of sub-objects, or a push-out along the pullback of the monomorphisms  $M_{\vec{x}}(\bigvee \Theta)=\bigvee\{M_{\vec{x}}(\phi)\ |\ \phi \in \Theta\}$
\item The existential quantifier of a formula $\exists y.\varphi$ is the following image factorisation.
\[\begin{tikzcd}
	X & {M(\vec{x},y)} & {M(\vec{x})} \\
	& {Im(\pi_{\vec{x}}M(\phi))}
	\arrow["{M(\phi)}", tail, from=1-1, to=1-2]
	\arrow[two heads, from=1-1, to=2-2]
	\arrow["{\pi_{\vec{x}}}", from=1-2, to=1-3]
	\arrow["{M_{\vec{x}}(\exists y.\phi)}"', tail, from=2-2, to=1-3]
\end{tikzcd}\]
\end{itemize}
\end{definition}

\begin{definition}[Valid]
\label{def:valid}
The sequent $\Phi \vdash \Psi$ is valid in the structure $M$, written as $M\vDash \Phi \vdash \Psi$, if and only if $\bigwedge\{M_{\vec{x}}(\phi)\ |\ \phi \in \Phi\} \leq \bigvee\{M_{\vec{x}}(\psi)\ |\ \psi \in \Psi\}$\footnote{The inequality is in reference to the typical subobject preorder structure.}
\end{definition}

\begin{definition}[$T$-model]
\label{def:model}
A $C$-structure $M$ of type $L$ is a model of the theory $T$, a $T$-model, if $L$ is the signature of $T$ and all sequents of $T$ are valid in $M$.
\end{definition}

\begin{definition}[$T$-model morphism]
\label{def:model_morphisms}
Given two $T$-models $M$ and $M'$ in a category $C$, a morphism of models $\alpha : M \rightarrow M'$ consists of a family of arrow $\alpha_s : M(s) \rightarrow M'(s)$ for every sort $s$ in the language of $T$, such that for every operation symbol $f: s_1 \times ... \times s_n \rightarrow s_o$ the following diagram commutes.
\[\begin{tikzcd}
	{M(s_1)\times...\times M(s_n)} & {M(s_o)} \\
	{M'(s_1)\times ... \times M'(s_n)} & {M'(s_o)}
	\arrow["{M(f)}", from=1-1, to=1-2]
	\arrow["{\alpha_{s_1}\times...\times\alpha_{s_n}}"', from=1-1, to=2-1]
	\arrow["{\alpha_{s_o}}", from=1-2, to=2-2]
	\arrow["{M'(f)}"', from=2-1, to=2-2]
\end{tikzcd}\]
And for every predicate symbol $p \subseteq s_1 \times ... \times s_n$ the following diagram commutes and the dashed arrow in the diagram exists.
\[\begin{tikzcd}
	X & {M(s_1)\times...\times M(s_n)} \\
	Y \\
	{X'} & {M(s_1)\times ... \times M(s_n)}
	\arrow["{M(p)}", tail, from=1-1, to=1-2]
	\arrow[two heads, from=1-1, to=2-1]
	\arrow["{\alpha_{s_1}\times...\times\alpha_{s_n}}", from=1-2, to=3-2]
	\arrow[dashed, from=2-1, to=3-1]
	\arrow[tail, from=2-1, to=3-2]
	\arrow["{M'(p)}"', tail, from=3-1, to=3-2]
\end{tikzcd}\]
\end{definition}

\begin{definition}[$T\text{-}Mod(C)$]
The category $T\text{-}Mod(C)$ has $T$ models in $C$ as its objects and $T$-model morphisms as its morphisms.
\end{definition}

As can be seen from Def \ref{def:interpretation}, a category needs to possess certain internal structures in order for it to be possible to interpret a $\kappa$-coherent theory inside of it. The categories with such a structure are $\kappa$-coherent categories and the functors which preserve these structures are $\kappa$-coherent functors.

\begin{definition}[$\kappa$-Coherent Category
\footnote{These categories are referred to by different names in the literature $\kappa$-coherent, $\kappa$-geometric, or as in \cite{makkai_first_1977} $\kappa$-logical. They are referred to here as $\kappa$-coherent as that appears to be the most consistent terminology.} \cite{makkai_first_1977, schreiber_geometric_2022}]
Given a regular cardinal $\kappa$, a $\kappa$-coherent category has all finite limits, pullback-stable image factorisations, and pullback-stable unions of $\kappa$-small families of sub-objects.
\end{definition}

The traditional notion of a coherent category has only finite unions, so would be an $\aleph_0$-coherent category. An infinitary coherent category or geometric category has all unions of sub-objects.

\begin{definition}[$\kappa$-Coherent Functor]
A $\kappa$-coherent functor preserves all finite limits, image factorisations and $\kappa$-small unions.
\end{definition}

$\kappa$-coherent categories and functors form a subcategory of $Cat$ referred to as $\kappa\text{-}Coh$. $\kappa\text{-}Coh$ is, in fact, a 2-category, possessing all natural transformations between functors it contains. Of particular relevance to this work is the 2-category of coherent categories ($\aleph_0$-coherent categories), which only possesses natural isomorphism, referred to as $Coh_\sim$. This is particularly important because Section \ref{section:mlmodels} requires the use of 2-(co)limits in $Coh_\sim$ as part of the construction of theories of neural networks.

\begin{definition}[$\kappa\text{-}Coh$]
$\kappa$-Coh is a 2-subcategory of $Cat$ which contains all $\kappa$-coherent categories and $\kappa$-coherent functors and all natural transforms.
\end{definition}

\begin{definition}[$\kappa\text{-}Coh_\sim$]
$\kappa\text{-}Coh_\sim$ is a (2,1)-subcategory of $Cat$ which contains all $\kappa$-coherent categories and $\kappa$-coherent functors and all natural isomorphisms.
\end{definition}

\begin{theorem}[Theorem 4.12 in \cite{kanalas_21-category_2021}]
\label{theorem:bicomplete}
$Coh_\sim$ is 2-complete and 2-cocomplete.
\end{theorem}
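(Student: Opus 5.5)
We will prove that $Coh_\sim$ has all (weighted, or at least pseudo/bi-) 2-limits and 2-colimits by constructing the limits concretely and obtaining the colimits from a 2-adjunction with theories.

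\textbf{2-limits.} We first show that $Coh_\sim$ is closed in $Cat$ under the relevant 2-limits, built as in $Cat$.
\begin{itemize}
\item \emph{Products.} Given a family $(C_i)_{i\in I}$ of coherent categories, the product $\prod_i C_i$ computes finite limits, images and finite unions componentwise. Pullback-stability holds componentwise, so the product is coherent and the projections are coherent functors. A family of coherent functors $F_i: D\to C_i$ induces $\langle F_i\rangle : D \to \prod_i C_i$, which is coherent because coherence is detected componentwise.
\item \emph{Iso-inserters and equifiers.} For parallel coherent functors $F,G : C \to D$, the iso-inserter has as objects pairs $(c, \theta: Fc \cong Gc)$. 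Finite limits, images and unions are created from $C$: since $F$ and $G$ preserve them, the isomorphisms $\theta$ lift uniquely. Equifiers are full subcategories of $C$ closed under the same constructions.
\end{itemize}
Since every pseudo/2-limit in a $(2,1)$-category can be built from products, iso-inserters and equifiers, this gives 2-completeness. All of this is routine.

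\textbf{2-colimits.} Here we use the syntactic-category correspondence. Every small coherent category $C$ has an internal theory $T_C$. Coherent functors $C\to D$ correspond, up to the natural isomorphisms allowed in $Coh_\sim$, to $T_C$-models in $D$, and $C \simeq \mathcal{C}_{T_C}$. This makes $Coh_\sim$ biequivalent to a 2-category of coherent theories and interpretations.

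Colimits of theories are then formed syntactically:
\begin{itemize}
\item A \emph{coproduct} is the disjoint union of the signatures and axioms.
\item A \emph{coinserter/coequifier-type colimit} adjoins the appropriate axioms: function symbols giving isomorphisms, with the required equations as sequents.
\end{itemize}
The syntactic category of the resulting theory represents $\mathrm{Mod}$ of the diagram, pseudo-naturally in the target $D$, because a model of a union theory is exactly a compatible family of models. That is precisely the universal property of the 2-colimit.

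\textbf{Main obstacle: size and representability.} Syntactic categories are small, but the "coherent categories" of the theorem may be large. I would therefore restrict to small coherent categories, as in \cite{kanalas_21-category_2021}, or to the essentially small setting implicit in the paper's use of theories. The delicate point is that the natural-isomorphism 2-cells must be matched up to isomorphism. Specifically, we must check that model morphisms which are isomorphisms correspond exactly to natural isomorphisms of coherent functors out of $\mathcal{C}_T$. I would verify this by induction on formulae, using Def.~\ref{def:model_morphisms}: an iso model morphism transports every interpreted subobject, so it induces the components on all objects $\{\vec{x}.\phi\}$ of $\mathcal{C}_T$.

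Once both constructions are in place, Theorem~\ref{theorem:bicomplete} follows as cited.
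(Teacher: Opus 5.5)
The paper gives no proof of this statement: it imports it from Kanalas. So there is no internal argument to compare with, and your outline has to stand on its own as a reconstruction. It does. Building pseudo-limits as in $Cat$ from products, iso-inserters and equifiers, and pseudo-colimits as syntactic categories of glued theories, is a sound route, and I see no genuine gap.

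\textbf{The ``main obstacle'' is not one.} An equivalence of categories restricts to an equivalence of cores, so iso model morphisms match natural isomorphisms automatically via Thm~\ref{theorem:model_categories}. Since $Th(C)$-models are literally coherent functors (Thm~\ref{theorem:model_categories_th}), a model of your glued theory in $D$ is on the nose a pseudo-cocone. What you need is this representability; a biequivalence between $Coh_\sim$ and a 2-category of theories is not required.

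\textbf{Iso-inserters and equifiers.} In the iso-inserter, not every subobject $m$ of $c$ lifts to $(c,\theta)$. Only those with $\theta_*[Fm]=[Gm]$ do. These are closed under pullback, images and finite unions because $F$ and $G$ preserve them, and that is what ``created'' must mean here. In the equifier, every subobject of a member is a member, since $Gm$ is monic. This gives images and unions at once.

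\textbf{Your pseudo/bi hedge is forced.} Let $E$ be $\mathbf{2}=\{0\to 1\}$ with a second terminal object $1'\cong 1$ adjoined. Let $F,G:\mathbf{2}\to E$ be the two equivalences sending $1$ to $1$ and to $1'$ respectively. Any coherent $H,K:X\to\mathbf{2}$ send the terminal object of $X$ to $1$, so $FH\neq GK$ always. Hence there are no strict cones at all, and a strict pullback, being itself the apex of one, cannot exist. So ``2-complete'' must be read in the pseudo/bi sense.

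\textbf{What your route buys over the bare citation.} The equivalence $Coh(Syn(T),D)\simeq T\text{-}mod(D)$ holds for every coherent $D$, large ones included. So your colimits visibly have the universal property against $Set$. That is exactly how the paper uses them, for example when factoring a $Set$-model through the 2-coequaliser in Thm~\ref{corollary:weight_tying}. Starting from a statement about small coherent categories, that step would need an extra closure argument.
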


Not only do $\kappa$-coherent categories provide the suitable structure for interpreting $\kappa$-coherent theories, but each one is associated with a $\kappa$-coherent theory. This comes from constructing a signature using the objects and morphisms of the $\kappa$-coherent category. Then, sequents are added to reproduce the relevant commutative diagrams of the category.

\begin{definition}[Canonical Signature\footnote{This is also referred to as the canonical language of the category as in some portions of the literature a signature is referred to as a language}]
\label{definition:canonical_signature}
The canonical signature of a category $C$ is the signature $L_C = \langle Ob(C), Mor(C), \varnothing \rangle$ with a sort for every object of $C$ and an operation for every morphism of $C$.
\end{definition}

\begin{definition}[Internal Theory]
Given a $\kappa$-coherent category $C$, its internal theory $Th(C)$ has as its signature $L_C$, the canonical signature of $C$ (Def \ref{definition:canonical_signature}), and contains sequents which refer to the identities, commutative triangles, finite limits, surjective arrows and $\kappa$-small unions of $C$.
\end{definition}

The internal theory of a $\kappa$-coherent category highlights the relationship between $\kappa$-coherent functors and models.

\begin{theorem}[Theorem 3.5.3 of \cite{makkai_first_1977}]
\label{theorem:model_categories_th}
The categories $Th(C)$-$mod(\mathcal{E})$ and $Coh(C, \mathcal{E})$ are isomorphic.
\end{theorem}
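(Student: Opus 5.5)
The plan is to construct the isomorphism $\Phi : Coh(C,\mathcal{E}) \to Th(C)\text{-}mod(\mathcal{E})$ directly and show it is bijective on objects and on morphisms. Here $\mathcal{E}$ is assumed $\kappa$-coherent, so that $Th(C)$ can be interpreted in it at all.

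\textbf{On objects.} Given a $\kappa$-coherent functor $F : C \to \mathcal{E}$, define the $\mathcal{E}$-structure $M_F$ of type $L_C$ by $M_F(A) = F(A)$ for each sort $A \in Ob(C)$ and $M_F(f) = F(f)$ for each operation symbol $f$. The sequents of $Th(C)$ come in several groups: identities, commutative triangles, finite limit sequents (a product or pullback cone is jointly monic and every compatible tuple factors through it), surjectivity sequents ($\top \vdash \exists x.\, f(x)=y$), and $\kappa$-small union sequents. Each group is valid in $M_F$ because $F$ preserves exactly the corresponding structure: functoriality handles identities and composites, preservation of finite limits handles the limit sequents, preservation of image factorisations handles covers, and preservation of $\kappa$-small unions handles the disjunctions. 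The check uses Def \ref{def:interpretation} and Def \ref{def:valid}, each time comparing the interpreted subobject with the image under $F$ of the corresponding subobject of $C$.

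Conversely, a model $M$ yields an assignment $F_M$ on objects and morphisms. Functoriality follows from validity of the identity and triangle sequents. Validity of the limit sequents says that the $F_M$-image of a limit cone satisfies the universal property in $\mathcal{E}$, so finite limits are preserved. The surjectivity sequents and union sequents similarly give preservation of covers, hence images, and of $\kappa$-small unions. These two assignments are mutually inverse on the nose, since both are literally the same data (an object for each object, an arrow for each arrow). That strictness is why the result is an isomorphism rather than merely an equivalence.

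\textbf{On morphisms.} A natural transformation $\alpha : F \Rightarrow G$ gives a family $\alpha_A$, and the commutation condition of Def \ref{def:model_morphisms} for operation symbols $f$ is exactly naturality. There are no predicate symbols in $L_C$, so the second condition is vacuous. Hence model morphisms and natural transformations coincide, and the correspondence is functorial.

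\textbf{Main obstacle.} The hardest step is the converse direction for limits and unions: showing that validity of the relevant sequents in $\mathcal{E}$ really forces preservation of the categorical structure, and not just of some internal-logic shadow of it. For example, for a pullback one must show that the interpreted formula $f(x)=g(y)$ is the subobject classified by the pair of images of the projections, and that the sequents force the induced map to be an isomorphism. I would handle this by reading each sequent as the two statements "the canonical comparison map is monic" and "the canonical comparison map is a cover," and then using that in a regular category a monic cover is an isomorphism. Unions would be treated the same way, as a comparison map being a cover onto the union.
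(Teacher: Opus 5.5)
Your proposal is correct. The paper gives no proof here and only cites Makkai--Reyes; your argument is the standard direct verification behind that result:
\begin{itemize}
\item a $Th(C)$-model and a $\kappa$-coherent functor are literally the same data;
\item validity of the identity, triangle, limit, cover and union sequents is equivalent to that data preserving the corresponding structure;
\item with no predicate symbols, model morphisms are exactly natural transformations, so you get an isomorphism rather than an equivalence.
\end{itemize}
Your ``comparison map is monic and a cover, hence an isomorphism'' reading is the right way to get the converse direction. For unions, read it as saying that the union of the images of the component inclusions is all of $M(B)$, since coproducts need not exist in a coherent category. This is also the pattern the paper itself uses, restricted to identity and triangle sequents, in its proof of Theorem~\ref{theorem:schema}.
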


This relationship can also be inverted by noticing that every $\kappa$-coherent theory has an associated $\kappa$-coherent category, referred to as its syntactic category. The objects of this category are equivalence classes of formulae and its morphisms are equivalence classes of $T$-provably functional formulae.

\begin{definition}[$T$-provably functional formulae]
A formula $\theta(\vec{x}, \vec{y})$ is $T$-provably functional from $\phi(\vec{x})$ to $\psi(\vec{y})$ (notated as $\theta(\vec{x}, \vec{y}) : \phi(\vec{x}) \rightarrow \psi(\vec{y})$) if it meets the following conditions.
\begin{itemize}[noitemsep,topsep=-8pt]
    \item $T\vDash \theta(\vec{x}, \vec{y}) \vdash \phi(\vec{x}) \wedge \psi(\vec{y})$.
    \item $T\vDash \phi(\vec{x}) \vdash \exists \vec{y}.\theta(\vec{x}, \vec{y})$
    \item $T\vDash \theta(\vec{x}, \vec{y}) \wedge \theta(\vec{x}, \vec{y'}) \vdash \vec{y}=\vec{y'}$
\end{itemize}
\end{definition}

\begin{definition}[Syntactic Category]
Given a $\kappa$-coherent theory $T$, its syntactic category $Syn(T)$ is given by the following.
\begin{itemize}[noitemsep,topsep=-8pt]
   \item The objects $[\phi(\vec{x})]\in Syn(T)$ are equivalence classes of coherent formulae up to free variable substitution. $\phi(\vec{x}) \sim \psi(\vec{y}) \iff \psi(\vec{y}) = \phi(\vec{y}/\vec{x})$.
   \item The morphisms $[\theta(\vec{x}, \vec{y})]: [\phi(\vec{x}) ] \rightarrow [\psi(\vec{y}))]$ are equivalence classes of $T$-provably functional formulae. $\theta(\vec{x}, \vec{y})\sim\theta'(\vec{x}, \vec{y})\iff (T\vDash \theta(\vec{x}, \vec{y}) \vdash \theta'(\vec{x}, \vec{y}))\wedge(T\vDash \theta'(\vec{x}, \vec{y}) \vdash \theta(\vec{x}, \vec{y})) $
\end{itemize}
\end{definition}

\begin{theorem}
\label{theorem:model_categories}
The categories $T$-$mod(C)$ and $Coh(Syn(T),C)$ are equivalent, written $T$-$mod(C)\simeq Coh(Syn(T),C)$.
\end{theorem}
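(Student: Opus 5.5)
The plan is to build the equivalence explicitly in both directions, with $C$ a $\kappa$-coherent category. The key tool is the \emph{generic model} $M_T$ of $T$ inside $Syn(T)$.

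\textbf{Step 1: the generic model.} Define a $Syn(T)$-structure $M_T$ as follows.
\begin{itemize}[noitemsep]
\item A sort $s$ goes to $[\top(x:s)]$.
\item A predicate $p\subseteq s_1\times\dots\times s_n$ goes to the subobject $[p(\vec{x})]\rightarrowtail[\top(\vec{x})]$.
\item An operation $f$ goes to the class of the provably functional formula $f(\vec{x})=y$.
\end{itemize}
We then check, by induction on formulae, that the interpretation of any $\phi(\vec{x})$ in $M_T$ is the subobject $[\phi(\vec{x})]\rightarrowtail[\top(\vec{x})]$. This works because finite limits, images, and $\kappa$-small unions in $Syn(T)$ are computed by $\wedge$, $\exists$, and $\bigvee$ of formulae. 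It follows that a sequent is valid in $M_T$ iff it is provable in $T$, so $M_T$ is a $T$-model.

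\textbf{Step 2: the restriction functor.} Define
\[
R:Coh(Syn(T),C)\to T\text{-}mod(C), \qquad F\mapsto F\circ M_T ,
\]
acting on natural transformations by restricting components to the sort objects $[\top(x:s)]$. Since $F$ preserves finite limits, images and $\kappa$-small unions, we get $F(M_T{}_{\vec{x}}(\phi))\cong (F M_T)_{\vec{x}}(\phi)$. Validity of sequents therefore transfers from $M_T$ to $F M_T$.

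\textbf{Step 3: the inverse functor.} Given a $T$-model $M$ in $C$, define $\hat M:Syn(T)\to C$ by sending $[\phi(\vec{x})]$ to the domain of $M_{\vec{x}}(\phi)$, using a chosen representative. A morphism $[\theta]$ goes to the arrow whose graph is $M(\theta)$. This subobject of $M(\phi)\times M(\psi)$ is the graph of a morphism precisely because of the three provable-functionality sequents, which hold in $M$ by soundness. Soundness of $\kappa$-coherent deduction for $\kappa$-coherent categories is a fact we must assume or cite from \cite{makkai_first_1977}. Independence of representatives also follows from soundness. That $\hat M$ is $\kappa$-coherent follows again from the inductive description of limits, images and unions in $Syn(T)$. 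Model morphisms extend to natural transformations: on $[\phi(\vec{x})]$ the component is the restriction of $\alpha_{s_1}\times\dots\times\alpha_{s_n}$ to $M(\phi)$. This restriction exists by induction on $\phi$, because coherent formulae are positive and model morphisms preserve predicates. Naturality is checked on graphs.

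\textbf{Step 4: the equivalence.} Show $R\hat M = M$, and $\widehat{R F}\cong F$ naturally in $F$. The second holds because both functors send $[\phi]$ to representatives of the same subobject of $F[\top(\vec{x})]$. This is why we obtain an equivalence rather than the isomorphism of Theorem \ref{theorem:model_categories_th}: the choices of subobject representatives only agree up to unique isomorphism.

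\textbf{Main obstacle.} The hardest part is the bookkeeping in Step 3 that $\hat M$ is a well-defined functor which preserves images and unions. Both rely on soundness and on the syntactic construction of the coherent structure of $Syn(T)$. A full alternative route is to reduce to Theorem \ref{theorem:model_categories_th} by showing $Th(Syn(T))$-models correspond to $T$-models. I would try the direct route first, citing Makkai–Reyes for soundness and for the description of limits in $Syn(T)$.
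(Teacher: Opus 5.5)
The paper does not prove this statement. It is recorded as background from the literature cited at the start of Section~\ref{section:background}, notably Makkai--Reyes \cite{makkai_first_1977}, so there is no in-paper argument to compare against. What you have written reconstructs the standard generic-model proof (as in Makkai--Reyes, or Johnstone's \emph{Elephant}, D1.4), and it is correct.

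Compared with the bare citation, your route makes explicit what the result rests on. It needs soundness of $\kappa$-coherent deduction. It also needs the syntactic computation of finite limits, images and $\kappa$-small unions in $Syn(T)$; their pullback-stability comes from the Frobenius and $\kappa$-small distributivity rules of the deduction system.

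It also pins down two readings the paper leaves loose. First, the fragment must contain $\exists$: it is absent from the paper's list of formula constructors, yet needed both for the morphisms and for the images of $Syn(T)$. Second, the `$T\vDash$' in the definition of provable functionality must mean derivability, as you assume. Read as validity in $Set$-models, the statement would be false for $\kappa>\aleph_0$. Take a consistent $\aleph_1$-coherent theory with no $Set$-models, for instance uncountably many pairwise distinct constants, each required to equal one of countably many others. It would give a degenerate $Syn(T)$. Since coherent functors preserve $0$ and $1$, this forces $Coh(Syn(T),C)=\varnothing$, while $T$-$mod(C)\neq\varnothing$ when $C$ is the derivability-based syntactic category carrying the generic model.

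One step you should state rather than leave inside `naturally in $F$' is uniqueness of extensions. A natural transformation $\eta:F\Rightarrow G$ between $\kappa$-coherent functors out of $Syn(T)$ is determined by its components at the sort objects $[\top(x{:}s)]$. This follows from naturality along the projections $[\top(\vec{x})]\to[\top(x_i)]$ and along the monos $[\phi(\vec{x})]\rightarrowtail[\top(\vec{x})]$, which $G$ sends to a jointly monic family and a mono respectively. This uniqueness is what makes your Step~3 extension of model morphisms inverse to restriction, and hence gives full faithfulness of $R$.
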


Finally, in order to provide all of the machinery for Thm \ref{theorem:hard_coding} it is necessary to define the concept of a replete subcategory (as an isofibration) and when a $\kappa$-coherent category is well pointed.

\begin{definition}[Isofibration \cite{johnson_2-dimensional_2020, shulman_isofibration_2023}]
An isofibration is a functor $p:E \rightarrow B$ such that for any object $e \in E$ and any isomorphism $i : p(e) \cong b$ there exists an isomorphism $j : e \cong e'$ such that $p(j) = i$.
\end{definition}

\begin{definition}[Replete Subcategory \cite{borceux_handbook_1994, bartels_replete_2024}]
A subcategory $D \rightarrowtail C$ is a replete subcategory if its inclusion is an isofibration.
\end{definition}

\begin{definition}[Well Pointed \cite{mac_lane_sheaves_1994, trimble_well-pointed_2023}]
A $\kappa$-coherent category is well pointed if its initial object is not its terminal object and its terminal object is an extremal generator: 
\begin{itemize}[noitemsep,topsep=-8pt]
   \item (Generator) Given $f,g: A \rightarrow B$ if $fx = gx$ for all $x: \mathbf{1} \rightarrow A$ then $f=g$.
   \item (Extremal) For any monomorphism $m:A\rightarrowtail B$, if every global element $\mathbf{1} \rightarrow B$ factors through $m$, then $m$ is an isomorphism.
\end{itemize}
\end{definition}

%% file: sections/datasets.tex
\section{Datasets}
\label{section:datasets}

Many machine learning problems may be characterised as error minimisation problems, where a model is selected that minimises the error between it and some target dataset. As shown in \cite{pugh_learning_2025}, such an error minimisation problem may be presented as a functor from a category of models into a category of datasets $Inf : M \rightarrow D$. This functor may be thought of as model inference. Usually, model inference is taken to be the evaluation of a machine learning model on a collection of inputs. By performing this process over the entire domain of a machine learning model one may construct a dataset. This induces a mapping from machine learning models to datasets. Abstracted to the category theoretic level, one may define such an operation directly as a functor, rather than through construction. Any such functor may be presented as an extension problem in some 2-category $\mathbb{T}$.

\begin{equation}
\begin{tikzcd}
	& \mu \\
	\delta && \tau
	\arrow["m", dashed, from=1-2, to=2-3]
	\arrow["\iota", from=2-1, to=1-2]
	\arrow["d"', from=2-1, to=2-3]
\end{tikzcd}
\end{equation}

The inference functor may be defined via the exponential $Inf = \mathbb{T}(\iota, \tau) : \mathbb{T}(\mu, \tau) \rightarrow \mathbb{T}(\delta, \tau)$, mapping models $m \in M = \mathbb{T}(\mu, \tau)$ via precomposition to datasets $m\iota \in D = \mathbb{T}(\delta, \tau)$. Presented as an extension with a 2-category, the error minimiser of such a problem is the left Kan-extension \cite{pugh_learning_2025}. Given this starting point, the question this work attempts to answer is how and where to construct such an extension problem for a given machine learning architecture. The answer stems from a particular perspective, that machine learning models are themselves datasets or databases. They are a method of storing data. The second step in the approach is to assert that datasets or databases are models of formal theories \footnote{The suggestion that databases should be models of formal theories is relatively standard within database theory \cite{abiteboul_foundations_1995}}. The consequence of this perspective is that an inference functor is given by an interpretation of the theory of datasets $\delta$ inside the theory of machine learning models $\mu$. An alternate phrasing would be to say that machine learning inference is determined by a canonical model of the chosen theory of datasets within the theory of machine learning models. 

This section will demonstrate how a relatively simple categorical model of databases, functorial databases \cite{spivak_functorial_2013}, can be lifted into a presentation as models of formal theories of a fragment of coherent logic. It will then develop tools for hard coding data into a $\kappa$-coherent category so that it may be used in the description of the relevant theory of machine learning models and datasets. The subsequent section will then demonstrate how to construct theories which have neural networks as their models, using these tools.

In the context of formal databases, the category of functorial database schema $Sch$ is equivalent to the category of small categories $Cat$. Consider the schema for the database of a shop, presented as the category below, called $Shop$.

\begin{equation}
\begin{tikzcd}
	Item & Price \\
	Order & Customer \\
	Employee & Person & Address
	\arrow["a", from=1-1, to=1-2]
	\arrow["b", from=2-1, to=1-1]
	\arrow["c", from=2-1, to=2-2]
	\arrow["d"', from=2-1, to=3-1]
	\arrow["e"', from=2-2, to=3-2]
	\arrow["f"', from=3-1, to=3-2]
	\arrow["g"', from=3-2, to=3-3]
\end{tikzcd}
\end{equation}

An instance or $Set$-model of this database schema is a functor $d:Shop \rightarrow Set$. An example $Set$-model can be seen in Fig \ref{figure:shop_data}.

\begin{figure}[htbp]      
    \centering
    \includegraphics[width=0.7\columnwidth]{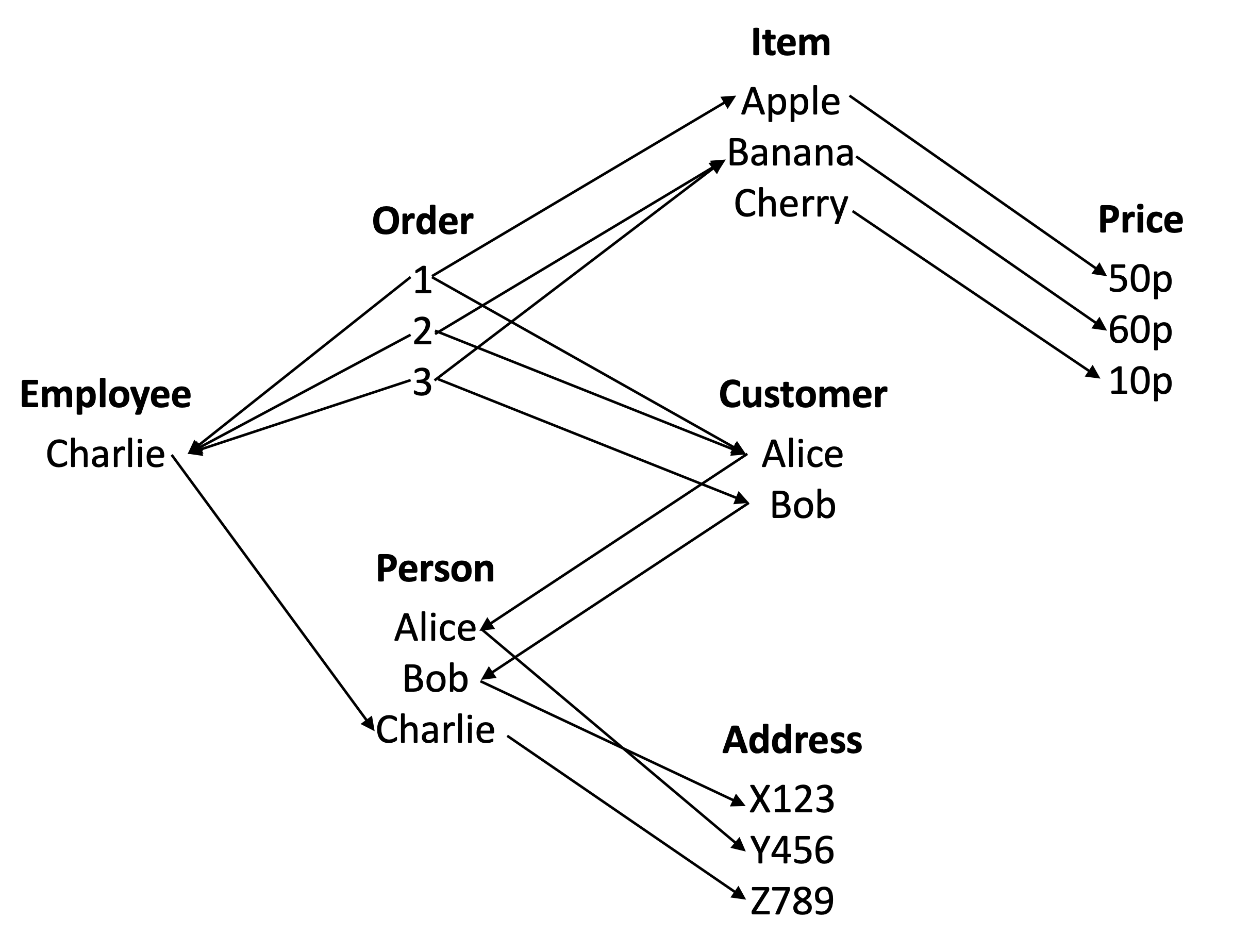}
    \caption{An example $Set$-model of $Shop$}
    \label{figure:shop_data}
\end{figure}

It is possible, given any such schema $D$, to find a $\kappa$-coherent theory whose category of models in any category $C$ is equivalent to the models of the functorial database schema in $C$.

\begin{theorem}
\label{theorem:schema}
For any small category $D$, there exists a $\kappa$-coherent theory, for any choice of $\kappa$, $Sch(D)$ such that for any $\kappa$-coherent category $C$, $Cat(D, C)$ is equivalent to $Coh(SynSch(D), C)$.
\end{theorem}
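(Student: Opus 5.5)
I would prove the statement by writing down the obvious theory of a functor out of $D$ and then passing through Theorem \ref{theorem:model_categories}. Define $Sch(D)$ over the signature $L_D = \langle Ob(D), Mor(D), \varnothing\rangle$, the canonical signature of $D$ viewed as a bare category (Def \ref{definition:canonical_signature}). It has one sort for each object of $D$ and one unary operation symbol $f : A \rightarrow B$ for each morphism $f : A \rightarrow B$ of $D$. Its sequents are of two kinds:
\[ \top \vdash \mathrm{id}_A(x) = x \quad (x : A) \]
for every object $A$, and
\[ \top \vdash g(f(x)) = (g\circ f)(x) \quad (x : A) \]
for every composable pair $f : A \rightarrow B$, $g : B \rightarrow C$. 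These are Horn sequents: they use no disjunction, no existential quantifier, and only atomic equalities. Hence $Sch(D)$ is a $\kappa$-coherent theory for every regular $\kappa$, and it is a set of sequents because $D$ is small.

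The first step is to identify $Sch(D)$-models in a $\kappa$-coherent category $C$ with functors $D \rightarrow C$. A $C$-structure $M$ of type $L_D$ is exactly an assignment of an object $M(A)$ to each object $A$ and a morphism $M(f) : M(A) \rightarrow M(B)$ to each $f$. By Def \ref{def:interpretation}, the equality $t_1 = t_2$ is interpreted as the equaliser of $M_x(t_1)$ and $M_x(t_2)$. Since the antecedent is $\top$, validity in the sense of Def \ref{def:valid} says this equaliser is the top subobject of $M(A)$. That holds exactly when the two morphisms are equal, here $M(\mathrm{id}_A) = \mathrm{id}_{M(A)}$ and $M(g)M(f) = M(gf)$. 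So models are precisely functors.

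The second step is to match morphisms. Because there are no predicate symbols, Def \ref{def:model_morphisms} reduces to a family $\alpha_A : M(A) \rightarrow M'(A)$ with $\alpha_B M(f) = M'(f)\alpha_A$ for each $f$, which is a natural transformation. Both identity and composition in the two categories are componentwise, so this gives an isomorphism of categories $Sch(D)\text{-}Mod(C) \cong Cat(D, C)$.

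The final step composes this isomorphism with Theorem \ref{theorem:model_categories}, which gives $Sch(D)\text{-}Mod(C) \simeq Coh(Syn(Sch(D)), C)$. The result is the claimed equivalence $Cat(D, C) \simeq Coh(SynSch(D), C)$.

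The main obstacle is making sure Theorem \ref{theorem:model_categories} really applies at the chosen $\kappa$. The syntactic category $Syn(Sch(D))$ has to be built in the $\kappa$-coherent fragment, so that it is $\kappa$-coherent and $Coh$ means $\kappa$-coherent functors; this holds because $Sch(D)$ is a legitimate $\kappa$-coherent theory for every $\kappa$. One also has to check that the equivalence is natural in the intended sense, i.e. that the generic model of $Syn(Sch(D))$ sends each sort $A$ to $[x : A]$ and each $f$ to the class of the formula $f(x) = y$. I would state this check explicitly and otherwise treat it as a citation.
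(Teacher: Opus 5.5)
Your proposal is correct and follows essentially the same route as the paper: the same signature with one unary operation per morphism, equational sequents with antecedent $\top$, models identified with functors because the equaliser must be the top subobject, model morphisms identified with naturality squares, and then Theorem \ref{theorem:model_categories}. The one difference is in your favour: the paper axiomatises only commutative triangles $\top \vdash f(g(x)) = h(x)$, which forces $M(\mathrm{id}_A)$ merely to be idempotent (for $D = \mathbf{1}$ its models are objects equipped with an idempotent endomorphism rather than bare objects), so your identity sequents $\top \vdash \mathrm{id}_A(x) = x$ are genuinely needed.
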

\begin{proof}
Construct a signature $L := \langle ob(D), mor(D), \varnothing \rangle$. Let $Sch(D)$ be a theory with signature $L$ and for any commutative triangle $fg = h$ in $D$, $Sch(D)$ contains the sequent $\top \vdash f(g(x)) = h(x)$. Given a coherent category $C$, by Def \ref{def:model} a $Sch(D)$-model, $M$, is a $C$-structure of type $L$ within which all sequents of $Sch(D)$ are valid. By Def  \ref{def:valid} a sequent $\top \vdash f(g(x)) = h(x)$ is valid if $M_{x}(\top) \leq M_{x}(f(g(x)) = h(x))$ which as $M(\top)$ is the top element of the subobject preorder implies that the sub-objects $e := eq(M(f)M(g), M(h)) : X \rightarrowtail M(X)$ and $Id_{M(x)} : M(X) \cong M(X)$ are related by an isomorphism $a : X \cong M(X)$ such that $e = a Id_{M(x)}$. This makes $e$ an isomorphism and thus an epimorphism. As an epimorphism, the relationship $eM(f)M(g)=eM(h)$ from the equaliser diagram implies $M(f)M(g)=M(h)$. Therefore a $Sch(D)$-model maps objects of $D$ to objects of $C$, morphisms of $D$ to morphisms of $C$, such that morphism composition is preserved, making $Sch(D)$-models functors of the form $M:D\rightarrow C$ and vice versa. This produces a bijection between $Ob(Cat(D,C))$ and $Ob(Sch(D)\text{-}Mod(C))$.

By Def \ref{def:model_morphisms} a morphism of models $\alpha: M \rightarrow M'$ is a family of morphisms $\alpha_s : M(s) \rightarrow M'(s)$ in $C$ for every sort $s$ in $L$ and thus over objects $s \in Ob(D)$ which satisfy the relevant commutative diagrams for all operations and predicates in $L$. Because $L$ has no relations, and only operations with a single input variable, then $\alpha$ is a family of morphism in $C$ indexed by the objects of $D$ and  required to satisfy commutative diagrams of the following form.
\begin{equation}
\begin{tikzcd}
	{M(s_i)} & {M(s_o)} \\
	{M(s_i)} & {M'(s_o)}
	\arrow["{M(f)}", from=1-1, to=1-2]
	\arrow["{\alpha_{s_i}}"', from=1-1, to=2-1]
	\arrow["{\alpha_{s_o}}", from=1-2, to=2-2]
	\arrow["{M'(f)}"', from=2-1, to=2-2]
\end{tikzcd}
\end{equation}
The diagrams are also the naturality square for the natural transform $\alpha : M \Rightarrow M'$ that the morphisms of $Cat(D, C)$ are required to satisfy. This not only puts the hom-objects $Sch(D)\text{-}Mod(C)(M, M')$ and $Cat(D,C)(M(f),M'(f))$ in bijection, but implies the isomorphism $Cat(D, C) \cong Sch(D) \text{-}Mod(C)$. From Thm \ref{theorem:model_categories},  $Sch(D)\text{-}Mod(C)\simeq Coh(SynSch(D), C)$. As an isomorphism is an equivalence and equivalences are transitive this results in $Cat(D, C) \simeq Coh(SynSch(D), C)$
\end{proof}

\begin{corollary}
There exists a logical fragment of coherent logic whose theories are functorial schema and whose models are functorial databases. The signature of theories of this logic have only sorts and operations, and the formulae are those formed from only atomic formulae and term substitution.
\end{corollary}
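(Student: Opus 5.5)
The corollary is read off from the construction in the proof of Thm \ref{theorem:schema}, so the plan is to inspect that construction rather than prove anything new. I would define the fragment explicitly. Its signatures are those $\langle S, P, F\rangle$ with $P = \varnothing$ and every operation symbol unary, $f : s_i \rightarrow s_o$. Its formulae are the atomic equations $t = t'$ between terms built by iterated substitution of such operations into a single free variable, together with $\top$. Its sequents are of the form $\top \vdash t = t'$. This is a sublanguage of $\kappa$-coherent logic for every $\kappa$, since it uses no $\bigvee$, $\exists$ or nontrivial $\bigwedge$. Hence it is in particular a fragment of (finitary) coherent logic.

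The main step is a two-way correspondence between theories of this fragment and functorial schema. In one direction, a small category $D$ gives the theory $Sch(D)$ of Thm \ref{theorem:schema}, which lies in the fragment by construction. In the other direction, a theory $T$ of the fragment gives a category $D_T$. Its objects are the sorts of $T$. Its morphisms are terms in one variable, modulo the congruence generated by the axioms $\top \vdash t = t'$. Composition is substitution, and identities are the bare variables. This is the usual presentation of a category by generators (the operation symbols) and relations (the equations). Unravelling the definitions shows that $D_T$-instances are exactly $T$-models. The argument is the one in the proof of Thm \ref{theorem:schema}: validity of $\top \vdash t = t'$ forces the equaliser of $M(t)$ and $M(t')$ to be the top subobject, hence $M(t)=M(t')$. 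So a $T$-model is a functor out of the presented category. Model morphisms correspond to naturality squares, because there are no predicates and all operations are unary.

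For models, Thm \ref{theorem:schema} already gives $Cat(D, C) \simeq Sch(D)\text{-}Mod(C) \simeq Coh(SynSch(D), C)$. Functorial databases, i.e. instances $D \rightarrow Set$ or more generally $D \rightarrow C$, are therefore precisely the models of the corresponding theory of the fragment.

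The only point needing care is the converse direction. I have to check that $D_T$ is well defined, with associativity following from associativity of substitution. I also have to check that $Sch(D_T)$ and $T$ have the same models. Both theories present the same equational congruence on terms, so the proof reduces to observing that provable equality in this fragment is exactly the congruence closure of the axioms. That holds because the only applicable coherent rules are reflexivity, symmetry, transitivity and substitutivity of equality.
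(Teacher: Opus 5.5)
Your proposal is correct, and its forward direction is exactly the paper's. The corollary is stated without proof and is meant to be read off from the construction of $Sch(D)$ in Thm \ref{theorem:schema}, which uses only sorts, unary operation symbols and sequents $\top \vdash t = t'$.

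You go genuinely further in two ways. First, you pin the fragment down: unary operations, one-variable contexts, antecedent $\top$. This is needed, because admitting $n$-ary operations (or equations between terms in different variables) gives theories whose model categories are not of the form $Cat(D,C)$; the algebraic theory of monoids is an example. Second, you supply the converse direction: you present a small category $D_T$ by generators and relations and show $T\text{-}Mod(C)\cong Cat(D_T,C)$. The paper's reading only shows that every schema yields a theory of the fragment. Your converse is what actually justifies the phrase ``whose theories are functorial schema''.

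Two things to tighten.

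\emph{The proof-theoretic step.} The justification in your final paragraph is not right as stated. The coherent sequent calculus also has cut and rules for $\bot$, $\vee$ and $\exists$, so a derivation of an equation may pass through formulae outside your fragment. You cannot dismiss those rules as inapplicable. You also do not need this step. The universal property of the presentation, which you already use, gives $T\text{-}Mod(C)\cong Cat(D_T,C)$. Composing with Thm \ref{theorem:schema} then gives $T\text{-}Mod(C)\simeq Sch(D_T)\text{-}Mod(C)$ directly. Since the two theories have different signatures, this is the only available sense of ``same models''. If you want the syntactic conservativity statement anyway, get it semantically. The representable $D_T(s,-)$ is a $Set$-model of $T$, so by soundness a derivable $\top\vdash_{x:s} t=t'$ yields $[t]=[t']$ after evaluating at $[x]$.

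\emph{Identity sequents.} $Sch(D)$ as written in the paper contains only the triangle sequents. Without $\top\vdash \mathrm{id}_A(x)=x$, a model may interpret $\mathrm{id}_A$ as any idempotent. For $D=\mathbf{1}$, the $Set$-models are sets with an idempotent endomap, and $D_{Sch(\mathbf{1})}$ is the one-object category on $\{1,e\}$ with $e^2=e$, not $\mathbf{1}$. Adding these sequents, which lie in your fragment, repairs the equivalence of Thm \ref{theorem:schema} that you rely on. It also makes your correspondence genuinely two-sided, with $D_{Sch(D)}\cong D$.
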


Thm \ref{theorem:schema} will be useful for forming coherent categories whose models may be identified with diagrams in the target category, allowing the production of certain (co)limit diagrams using these coherent functors. It will also be helpful to have a method of producing $\kappa$-coherent categories whose models are guaranteed to contain already known data which may be referenced by future constructions. It is not always convenient to construct an axiomatised representation of some data structure so that it may be referenced within models of the theory. For this purpose a 'Hard Coding' theorem (Thm \ref{theorem:hard_coding}) is introduced which guarantees for certain functors or functorial database instances there exist $\kappa$-coherent theories whose models all contain that data.

\begin{lemma}
\label{lemma:smallest_geo_sub}
Given a small, well-pointed, $\kappa$-coherent category $\tau$, and a functor $F:C \rightarrow \tau$ such that $\tau(\mathbf{1}, F(X))$ is $\kappa$-small, there exists a smallest replete $\kappa$-coherent subcategory $\varphi : \delta \rightarrow \tau$ that contains the image of $F$ and all morphisms $x:\mathbf{1} \rightarrow F(X)$ for any $X \in C$.
\end{lemma}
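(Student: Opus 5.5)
The natural route is to take the intersection of all replete $\kappa$-coherent subcategories of $\tau$ that contain the required data, and to show this intersection is again such a subcategory. Since $\tau$ is small, the replete subcategories of $\tau$ form a set. Let $\mathcal{S}$ be the set of replete $\kappa$-coherent subcategories $\delta' \subseteq \tau$ whose inclusion is a $\kappa$-coherent functor and which contain $F(X)$, $F(f)$ and every global element $x:\mathbf{1}\rightarrow F(X)$ for all objects $X$ and morphisms $f$ of $C$. The set $\mathcal{S}$ is nonempty, since $\tau$ itself belongs to it. Define $\delta := \bigcap \mathcal{S}$, the subcategory whose objects and morphisms lie in every member of $\mathcal{S}$. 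By construction it contains the image of $F$ and all the points $x$, and it lies below every member of $\mathcal{S}$. What remains is to check that $\delta \in \mathcal{S}$.

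\textbf{Repleteness.} Let $e\in\delta$ and $i:e\cong b$ in $\tau$. Each $\delta'\in\mathcal{S}$ is replete and contains $e$, so $i$ and $b$ lie in $\delta'$ (replete subcategories contain all isomorphisms out of their objects). Hence $i$ and $b$ lie in $\delta$, and the inclusion is an isofibration.

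\textbf{$\kappa$-coherent structure.} Requiring the inclusions to be $\kappa$-coherent means each $\delta'$ is closed under the chosen finite limits, images and $\kappa$-small unions of $\tau$. By repleteness this holds up to isomorphism, so it does not depend on choices. Given a finite diagram in $\delta$, it lies in every $\delta'$. Its $\tau$-limit, together with the limit cone, then lies in every $\delta'$, and hence in $\delta$. For the universal property, note that any cone in $\delta$ is a cone in $\tau$. Its unique mediating map $u$ is the mediating map computed in each $\delta'$, because $\delta'$ contains $\tau$'s limit and is a subcategory, so $u\in\delta'$; therefore $u\in\delta$. Image factorisations and $\kappa$-small unions are handled the same way. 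Two further points need care. First, the union of a $\kappa$-small family of subobjects in $\delta$ must also be the union in $\delta$'s subobject lattice. This follows because the $\tau$-union lies in $\delta$ and every comparison monomorphism in $\delta$ is a $\tau$-comparison, so subobjects in $\delta$ ordered by factorisation agree with those in $\tau$. Second, pullback-stability is inherited from $\tau$, since pullbacks in $\delta$ are computed in $\tau$. So $\delta$ is $\kappa$-coherent and its inclusion $\varphi$ preserves the structure.

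I expect the main obstacle to be pinning down what ``$\kappa$-coherent subcategory'' means. The cleanest choice, which I will fix explicitly, is ``closed in $\tau$ under finite limits, images and $\kappa$-small unions''. Then $\kappa$-coherence of $\delta$ and of $\varphi$ both follow from closure. Under this reading, the hypotheses that $\tau$ is well-pointed and that $\tau(\mathbf{1},F(X))$ is $\kappa$-small are not needed for existence. They enter only in the subsequent hard-coding theorem, where the $\kappa$-small union of the points $x:\mathbf{1}\rightarrow F(X)$ must exist in $\delta$ and, by the extremal generator property, equal $F(X)$. I would note this remark in the proof so the lemma's role is clear.
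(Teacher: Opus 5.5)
Your proposal is correct and follows the same route as the paper. The paper also takes the intersection of all replete $\kappa$-coherent subcategories of $\tau$ that contain the image of $F$ and the points $x:\mathbf{1}\rightarrow F(X)$; it phrases this as the limit in $Cat$ of the small preorder $\Lambda$ of such subcategories, which is nonempty because $\tau$ itself belongs to it. Your explicit check that the intersection is again replete and closed under finite limits, images and $\kappa$-small unions (including the mediating maps) supplies a step the paper only asserts, and you are right that well-pointedness and the $\kappa$-smallness hypothesis play no role in this lemma.
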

\begin{proof}
Consider the category $\Lambda$ whose morphisms are $\kappa$-coherent inclusion functors and whose objects are replete $\kappa$-coherent subcategories of $\tau$ which contain the image of $F$ and any morphism $x:\mathbf{1} \rightarrow F(X)$ for any $X \in C$. This category is not empty because the identity functor $Id_\tau : \tau \rightarrow \tau$ satisfies these conditions.

Because $\tau$ is a small category, its morphisms are contained in a small set, meaning that the power-set of its morphisms $\mathcal{P}(Mor(\tau))$ is also a small set. Any object of $\Lambda$ represent a subset of the morphisms of $\tau$, producing an inclusion $Ob(\Lambda) \rightarrowtail \mathcal{P}(Mor(\tau))$ meaning that $Ob(\Lambda)$ must also be a small set. Because the morphisms of $\Lambda$ are inclusion functors then $\Lambda$ is a preorder which, as $Ob(\Lambda)$ is small, means that $\Lambda$ is a small category. As $\Lambda$ is a small subcategory of $Cat$ it represents a small diagram. Because $Cat$ contains all small limits then the limit of $\Lambda$ exists in $Cat$. Call this limit $\varphi : \delta \rightarrow \tau$. This limit represents the intersection of replete $\kappa$-coherent subcategories, making $\delta$ a replete $\kappa$-coherent subcategory of all of the categories in $\Lambda$, possessing an inclusion functor $\delta \rightarrow \delta_i$ for any $\delta_i\in\Lambda$ making $\delta$ smaller than all the categories in $\Lambda$. Because all the categories in $\Lambda$ contain the image of $F$ and any morphism $x:\mathbf{1} \rightarrow F(X)$ for any $X \in C$ then so does $\varphi: \delta \rightarrow \tau$.
\end{proof}

\begin{corollary}
Given a small, well-pointed, infinitary-coherent category $\tau$, and a functor $F:C \rightarrow \tau$ there exists a smallest replete infinitary-coherent subcategory $\varphi : \delta \rightarrow \tau$ that contains the image of $F$ and all morphisms $x:\mathbf{1} \rightarrow F(X)$ for any $X \in C$.
\end{corollary}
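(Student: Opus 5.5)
The plan is to reduce the corollary to Lemma \ref{lemma:smallest_geo_sub} by choosing the cardinal $\kappa$ appropriately. The only hypothesis of the lemma not assumed here is that each $\tau(\mathbf{1}, F(X))$ is $\kappa$-small. Since $\tau$ is small, $Mor(\tau)$ is a set, so its cardinality $\lambda = |Mor(\tau)|$ bounds $|\tau(\mathbf{1},F(X))|$ for every $X \in C$, uniformly in $X$ and independently of the size of $C$.

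Next I would pick a regular cardinal $\kappa > \lambda$, for example the successor cardinal $\lambda^+$, which is regular under the axiom of choice. Then every hom-set $\tau(\mathbf{1},F(X))$ is $\kappa$-small, and so is every family of subobjects of any object of $\tau$, since such a family is bounded by $\lambda$. Because $\tau$ is infinitary-coherent (geometric), it has all pullback-stable unions of subobjects. It is therefore in particular $\kappa$-coherent. Well-pointedness and smallness do not depend on $\kappa$, so Lemma \ref{lemma:smallest_geo_sub} applies and yields a smallest replete $\kappa$-coherent subcategory $\varphi:\delta\rightarrow\tau$ containing the image of $F$ and all global elements of each $F(X)$.

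The final step is to show that $\delta$ is also the smallest replete \emph{infinitary}-coherent subcategory with these properties, and this is the main obstacle, since infinitary-coherence asks for closure under all unions rather than $\kappa$-small ones. I would argue as follows. Any subobject family in a subcategory of $\tau$ has at most $\lambda < \kappa$ members, up to the repletion and isomorphism bookkeeping. So for subcategories of $\tau$, being closed under $\kappa$-small unions computed in $\tau$ is the same as being closed under arbitrary unions computed in $\tau$. Hence the replete $\kappa$-coherent subcategories and the replete infinitary-coherent subcategories of $\tau$ coincide, and so do their inclusion functors. This requires arbitrary unions to be interpreted as unions computed in $\tau$ and preserved by the inclusion. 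The collection $\Lambda$ from the lemma's proof is then literally the same for both notions, and its limit $\delta$ is the required smallest subcategory.

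Some care is needed over the fact that subobjects are isomorphism classes of monos. This is handled by counting morphisms of $\tau$ rather than classes, which only decreases cardinality.
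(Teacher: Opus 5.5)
Your argument is correct, but it takes a different route from the one the paper intends. The paper gives no separate proof. The corollary is meant to follow by rerunning the proof of Lemma \ref{lemma:smallest_geo_sub} with ``infinitary-coherent'' in place of ``$\kappa$-coherent''. That is, one takes the preorder of all replete infinitary-coherent subcategories of $\tau$ that contain the image of $F$ and the global elements of each $F(X)$. This preorder is small because $\tau$ is small, and one intersects its members. The hypothesis that $\tau(\mathbf{1},F(X))$ is $\kappa$-small is never used in that intersection argument. It is only needed later, in Lemma \ref{lemma:isomorphic_component}, where $\varphi$ must preserve the union of global elements. So the hypothesis can simply be dropped.

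You instead use the lemma as a black box. You choose a regular $\kappa > |Mor(\tau)|$ so that the smallness hypothesis holds automatically. You then show that, for subcategories of a small $\tau$, the replete $\kappa$-coherent subcategories and the replete infinitary-coherent subcategories (with their inclusions) are the same. This holds because any set of subobjects in such a subcategory has at most $|Mor(\tau)|$ members, and a union of a family with repetitions equals the union of its underlying set. That step, which you flag as the main obstacle, is sound. It also yields a slightly stronger observation: for subcategories of a small $\tau$, infinitary coherence coincides with $\kappa$-coherence for every regular $\kappa > |Mor(\tau)|$. The paper's direct route is shorter and avoids the cardinal bookkeeping entirely.

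One small fix is needed. $\lambda^+$ is regular only when $\lambda$ is infinite. If $\tau$ has only finitely many morphisms, take $\kappa = \aleph_0$ instead. In general, $\kappa = \max(\aleph_0, \lambda^+)$ works.
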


\begin{lemma}
\label{lemma:lift_from_C_to_delta}
There exists a functor $\rho: C \rightarrow \delta$ such that $\varphi\rho = F$
\end{lemma}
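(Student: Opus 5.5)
The plan is to define $\rho$ as the corestriction of $F$ along the inclusion $\varphi$. The only real input is that, by Lemma \ref{lemma:smallest_geo_sub}, $\delta$ contains the image of $F$: every object $F(X)$ and every morphism $F(f)$ for $X, f$ in $C$ lies in $\delta$. Since $\varphi : \delta \rightarrow \tau$ is a subcategory inclusion, it is injective on objects and faithful (indeed injective on morphisms). So each $F(X)$ and each $F(f)$ has a unique preimage in $\delta$.

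Set $\rho(X)$ to be the unique object of $\delta$ with $\varphi(\rho(X)) = F(X)$. Set $\rho(f)$ to be the unique morphism of $\delta$ with $\varphi(\rho(f)) = F(f)$. The equation $\varphi\rho = F$ then holds on the nose, by construction.

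It remains to check functoriality of $\rho$. The source and target of $\rho(f)$ are correct because $\varphi$ preserves domains and codomains and is injective on objects. For identities, $\varphi(\rho(Id_X)) = F(Id_X) = Id_{F(X)} = \varphi(Id_{\rho(X)})$, and injectivity of $\varphi$ on morphisms gives $\rho(Id_X) = Id_{\rho(X)}$. For composites, $\varphi(\rho(g)\rho(f)) = F(g)F(f) = F(gf) = \varphi(\rho(gf))$, and the same injectivity gives $\rho(g)\rho(f) = \rho(gf)$.

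The main point to watch is that $\delta$, constructed as a limit in $Cat$ in Lemma \ref{lemma:smallest_geo_sub}, really comes with $\varphi$ injective on objects and morphisms. This is so: a limit of a diagram of inclusions is computed setwise as an intersection, and the limit leg to $\tau$ is a composite of inclusions. If one only knew $\varphi$ up to equivalence, the construction would still go through by choosing preimages, but then $\varphi\rho = F$ would hold only up to natural isomorphism. Working with the concrete intersection avoids this and gives strict equality.
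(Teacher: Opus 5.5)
Your proposal is correct and takes the same approach as the paper: define $\rho$ as the corestriction of $F$ through the inclusion $\varphi$, which is possible because $\delta$ contains the image of $F$. You also spell out details the paper leaves implicit, namely the functoriality check via injectivity of $\varphi$ on morphisms, and taking $\delta$ as the concrete intersection so that $\varphi\rho = F$ holds strictly rather than only up to isomorphism.
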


\begin{proof}
This is a direct consequence of $\varphi$ being an inclusion functor and $F$ being entirely contained in the image of $\varphi$.
\end{proof}

\begin{lemma}
\label{lemma:isomorphic_component}
For any $\kappa$-coherent functor $\varrho : \delta \rightarrow \tau$ there exists an isomorphism $\delta(\mathbf{1}, \rho(X)) \cong \tau(\mathbf{1}, \varrho \rho(X))$ for all $X\in C$.
\end{lemma}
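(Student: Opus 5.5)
The plan is to show that the canonical comparison map
\[
\varrho_{\mathbf{1},\rho(X)} : \delta(\mathbf{1}, \rho(X)) \rightarrow \tau(\varrho(\mathbf{1}), \varrho\rho(X)) \cong \tau(\mathbf{1}, \varrho\rho(X))
\]
is a bijection. This map is well-defined because $\varrho$ preserves finite limits, and hence the terminal object. The first step is a structural fact about $\delta$ from Lemma \ref{lemma:smallest_geo_sub}: since $\tau$ is well pointed, the extremal generator condition forces every subobject of $\mathbf{1}$ in $\tau$ to be either $0$ or $\mathbf{1}$. Indeed, a mono $m : A \rightarrowtail \mathbf{1}$ with a global element of $A$ is an isomorphism, and otherwise $A$ has no points. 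By the generator condition, $A$ is then initial.

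Since $\varphi : \delta \rightarrow \tau$ is a $\kappa$-coherent inclusion, it preserves and reflects these facts. Subobjects of $\mathbf{1}$ in $\delta$ are computed as in $\tau$, and the initial object of $\delta$ is the empty union, which is preserved by $\varphi$.

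The second step expresses $\rho(X)$ as a union of its points. The set $\tau(\mathbf{1}, F(X))$ is $\kappa$-small by hypothesis, and all of these points lie in $\delta$. So the union $U = \bigvee_{x} x$ over $x \in \delta(\mathbf{1},\rho(X))$ exists in $\delta$, and $\varphi$ sends it to the union in $\tau$. Every global element of $F(X)$ factors through $\varphi(U)$, so by extremality $\varphi(U) \cong F(X)$. Repleteness, or equivalently the reflection of isomorphisms by the inclusion, then gives $U = \rho(X)$ as subobjects in $\delta$. Because $\varrho$ preserves $\kappa$-small unions, we obtain $\varrho\rho(X) = \bigvee_x \varrho(x)$ in $\tau$.

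For surjectivity, take $y : \mathbf{1} \rightarrow \varrho\rho(X)$ and pull the union back along $y$. By pullback-stability, $\mathbf{1} = \bigvee_x y^*(\varrho(x))$, and each $y^*(\varrho(x))$ is a subobject of $\mathbf{1}$, hence $0$ or $\mathbf{1}$. They cannot all be $0$, since $0 \neq \mathbf{1}$ and a union of initial subobjects is initial. So some $y^*(\varrho(x)) = \mathbf{1}$, meaning $y$ factors through $\varrho(x)$. As both are points of $\varrho\rho(X)$ and $\varrho(x)$ is monic with domain $\mathbf{1}$, this gives $y = \varrho(x)$.

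For injectivity, take distinct points $x \neq x'$ of $\rho(X)$ in $\delta$. Their pullback $x \wedge x'$ is a subobject of $\mathbf{1}$ in $\delta$ and is not $\mathbf{1}$, since otherwise $x = x'$. Hence it is $0$. Now $\varrho$ preserves pullbacks and the initial object (as the empty union), so $\varrho(x) \wedge \varrho(x') = 0 \neq \mathbf{1}$ in $\tau$. Therefore $\varrho(x) \neq \varrho(x')$.

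The main obstacle is the dichotomy "subobjects of $\mathbf{1}$ are $0$ or $\mathbf{1}$". It must be extracted carefully from well-pointedness, in particular the step from "$A$ has no points" to "$A$ is initial", for which I would use the generator condition on the two maps $A \rightrightarrows A + A$. It must also be transported correctly along the inclusion $\varphi$. A secondary subtlety is that $\kappa$-smallness of $\tau(\mathbf{1},F(X))$ is exactly what is needed for the union in the second step to exist and be preserved by $\varrho$.
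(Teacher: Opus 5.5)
Your proposal is correct and follows the same route as the paper. Injectivity holds because distinct points of $\rho(X)$ have initial pullback, which $\varrho$ preserves while $0 \not\cong \mathbf{1}$ in $\tau$; surjectivity comes from writing $\rho(X)$ as the $\kappa$-small union of its points (extremality in $\tau$, union-preservation by $\varphi$, repleteness), carrying this along $\varrho$ and pulling back along a point $y$. Your explicit dichotomy for subobjects of $\mathbf{1}$ and your use of pullback-stability simply make precise what the paper leaves implicit.

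One small repair: get ``no points implies initial'' by applying extremality to the least subobject $0 \rightarrowtail A$, which is initial in a coherent category. The generator condition on $A \rightrightarrows A+A$ does not work here, because the generator condition alone only gives uniqueness of maps out of $A$, and binary coproducts need not exist in a $\kappa$-coherent category.
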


\begin{proof}
Consider the component function $\varrho_{\mathbf{1}, \rho(X)} : \delta(\mathbf{1}, \rho(X)) \rightarrow \tau(\varrho(\mathbf{1}),\varrho \rho(X))$. First, demonstrate that this function is a monomorphism. Given two morphisms $x,y:\mathbf{1} \rightarrow \rho(X)$ in $\delta$, such that $x \neq y$, notice that the pull-back of $x$ along $y$ produces the initial object $\varnothing \in \delta$. Because $\varrho$ is a $\kappa$-coherent functor, it preserves finite limits, meaning that the pull-back of $\varrho(x)$ along $\varrho(y)$ in $\tau$ is $\varrho(\varnothing)$. Because $\varrho$ preserves  $\kappa$-small unions, including the empty union, $\varrho(\varnothing)$ is the initial object in $\tau$, $\varrho(\varnothing) \cong \varnothing$. Because $\tau$ is well-pointed, the initial object is distinct from the terminal object. As the pull-back produces the initial object and not the terminal object, this implies that $\varrho(x) \neq \varrho(y)$ if $x\neq y$ showing that $\varrho_{\mathbf{1}, \rho(X)}$ is a monomorphism.

Second, demonstrate that $\varrho_{\mathbf{1}, \rho(X)}$ is an epimorphism. Because $\tau$ is well pointed, $\mathbf{1}$ is an extremal generator, meaning that the union $\bigvee \{z:\mathbf{1} \rightarrow F(X)\}$ is isomorphic to $F(X)$. As shown in Lemma \ref{lemma:smallest_geo_sub}, $\delta$ contains all morphisms $x : \mathbf{1} \rightarrow F(X)$. Using the formula $F=\varphi\rho$ from Lemma \ref{lemma:lift_from_C_to_delta}, this can be rephrased as the statement that $\bigvee \{\varphi(x):\mathbf{1} \rightarrow \varphi\rho(X)\}$ is isomorphic to $\varphi\rho(X)$. Because $\varphi$ is $\kappa$-coherent, it preserves $\kappa$-small unions and by assumption $\{x:\mathbf{1} \rightarrow \rho(X)\}$ is $\kappa$-small. $\varphi(\bigvee \{x:\mathbf{1} \rightarrow \rho(X)\}) \cong \varphi\rho(X)$. Because $\delta$ is a replete subcategory, $\varphi$ is an isofibration, meaning that it lifts isomorphisms, leading to the statement that $\bigvee \{x:\mathbf{1} \rightarrow \rho(X)\}) \cong \rho(X)$. Because $\varrho$ is a $\kappa$-coherent functor, it preserves $\kappa$-small unions, $\varrho \rho(X) \cong \bigvee\{\varrho(x):\mathbf{1} \rightarrow \varrho\rho(X)\}$.

If $\varrho_{\mathbf{1}, \rho(X)}$ was not an epimorphism, there would exist a $y:\mathbf{1} \rightarrow \varrho \rho(x)$ such that there would be no $x : \mathbf{1} \rightarrow \rho(x)$ in $\delta$ where $\varrho(x) = y$. This would imply that the pull-back of $\bigvee\{\varrho(x):\mathbf{1} \rightarrow \varrho\rho(X)\}$, as a sub-object of $\varrho \rho(X)$, along $y$, would be the initial object $\varnothing$. However, it was shown that the union is isomorphic to $\varrho \rho(x)$ and the pull-back of $\varrho \rho(x)$ along $y$ cannot be the initial object, implying contradiction and demonstrating that no such $y$ can exist. This demonstrates that $\varrho_{\mathbf{1}, \rho(X)}$ must be epimorphic. As $\varrho_{\mathbf{1}, \rho(X)}$ is a set function which is both monomorphic and epimorphic, it must be isomorphic.
\end{proof}

\begin{theorem}[Hard Coding]
\label{theorem:hard_coding}
Given a functor $F:C \rightarrow \tau$ where $\tau$ is a small well-pointed $\kappa$-coherent category, and $\tau(\mathbf{1}, F(X))$ is $\kappa$-small, there exists a $\kappa$-coherent category $\delta$ with functor $\rho: C\rightarrow \delta$ such that for any $\kappa$-coherent functor $\varrho: \delta \rightarrow \tau$ there exists a natural isomorphism $F \cong \varrho\rho$.
\end{theorem}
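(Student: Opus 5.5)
The plan is to assemble the theorem from the three preceding lemmas. First, I apply Lemma \ref{lemma:smallest_geo_sub} to the given data $F:C\rightarrow\tau$. Its hypotheses are exactly those of the theorem: $\tau$ is small and well-pointed, and each $\tau(\mathbf{1},F(X))$ is $\kappa$-small. This yields the smallest replete $\kappa$-coherent subcategory $\varphi:\delta\rightarrow\tau$ containing the image of $F$ together with all global elements $x:\mathbf{1}\rightarrow F(X)$. This $\delta$ is the $\kappa$-coherent category of the statement. Lemma \ref{lemma:lift_from_C_to_delta} then supplies $\rho:C\rightarrow\delta$ with $\varphi\rho=F$. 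It remains, for an arbitrary $\kappa$-coherent $\varrho:\delta\rightarrow\tau$, to construct a natural isomorphism $\eta:F\Rightarrow\varrho\rho$.

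For the components, I use well-pointedness of $\tau$ to reduce the problem to global elements. By Lemma \ref{lemma:isomorphic_component}, the function $\varrho_{\mathbf{1},\rho(X)}$ is a bijection $\delta(\mathbf{1},\rho(X))\cong\tau(\mathbf{1},\varrho\rho(X))$. Since $\varphi$ is a full-on-points inclusion and $\delta$ contains every point of $F(X)$, we also have $\delta(\mathbf{1},\rho(X))=\tau(\mathbf{1},F(X))$. Composing gives a bijection of points $\sigma_X:\tau(\mathbf{1},F(X))\cong\tau(\mathbf{1},\varrho\rho(X))$, sending $x$ to $\varrho(x)$. Here $\varrho(\mathbf{1})\cong\mathbf{1}$ because $\varrho$ preserves finite limits.

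Next, I realise $\sigma_X$ by a morphism $\eta_X:F(X)\rightarrow\varrho\rho(X)$. Both objects are the $\kappa$-small union of their points: for $F(X)$ this is the extremal generator property, and for $\varrho\rho(X)$ it follows as in the proof of Lemma \ref{lemma:isomorphic_component}. Points of a well-pointed coherent category are pairwise disjoint subobjects, since distinct points pull back to $\varnothing$. So I intend to present each object as a coproduct $\coprod_{x}\mathbf{1}$ indexed by its points, using that disjoint unions in a coherent category are coproducts. Then $\eta_X$ is the map induced by $x\mapsto\sigma_X(x)$, and it is an isomorphism because it is a bijection on the index sets.

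Naturality is checked pointwise via the generator property. For $f:X\rightarrow Y$ in $C$ and any point $x$ of $F(X)$, we have $\eta_Y F(f)x=\varrho(F(f)x)$. Meanwhile $\varrho\rho(f)\eta_X x=\varrho(\rho(f))\varrho(x)=\varrho(\rho(f)x)$, and these agree since $\rho(f)x=F(f)x$ inside $\delta$. Because $\mathbf{1}$ is a generator, $\eta_Y F(f)=\varrho\rho(f)\eta_X$.

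The main obstacle is the third step: showing that a $\kappa$-small union of disjoint points is genuinely a coproduct, so that $\eta_X$ can be defined and is invertible. I expect to use the standard fact that in a coherent category a union of pairwise disjoint subobjects is their coproduct, extended to $\kappa$-small families by pullback-stability of $\kappa$-unions. Some care is also needed with the identification $\varrho(\mathbf{1})\cong\mathbf{1}$ when transporting points.
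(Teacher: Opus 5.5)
Your proposal is correct and follows the paper's proof essentially step for step. Like the paper, you build $\varphi:\delta\rightarrow\tau$ and $\rho$ from the first two lemmas, obtain the bijection of points $x\mapsto\varrho(x)$ from Lemma \ref{lemma:isomorphic_component}, write $F(X)$ and $\varrho\rho(X)$ as unions of their points, and check naturality pointwise using that $\mathbf{1}$ is a generator. Your use of the fact that a $\kappa$-small union of pairwise disjoint points is a coproduct $\coprod_x\mathbf{1}$ tightens the step the paper only calls ``re-indexing the union''. The two unions live in different subobject lattices, so it is precisely this coproduct property that gives an actual morphism $\eta_X$ together with its inverse.
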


\begin{proof}
Use lemma \ref{lemma:smallest_geo_sub} to construct $\varphi : \delta \rightarrow \tau$. From lemma \ref{lemma:lift_from_C_to_delta} there exists a functor $\rho : C \rightarrow \delta$ such that $\varphi \rho = F$. Because $\varphi$ is faithful, then for any $X \in C$ the component function $\varphi_{\mathbf{1}, \rho(X)} : \delta(\mathbf{1}, \rho(X)) \rightarrow \tau(\varphi(\mathbf{1}), \varphi\rho(X))$ is a monomorphism. By construction, as every morphism $x:\mathbf{1} \rightarrow F(X)$ is contained in the image of $\varphi$, then the component morphism $\varphi_{\mathbf{1},\rho(X)}$ is an epimorphism. As $\varphi_{\mathbf{1},\rho(X)}$ is a set function which is both monomorphic and epimorphic, then it is isomorphic, meaning $\delta(\mathbf{1}, \rho(X)) \cong \tau(\varphi(\mathbf{1}), \varphi\rho(X))$. For any $\kappa$-coherent functor $\varrho: \delta \rightarrow \tau$, by lemma \ref{lemma:isomorphic_component} there exists an isomorphism $\delta(\mathbf{1}, \rho(X)) \cong \tau(\mathbf{1}, \varrho \rho(X))$. By combination with the previous isomorphism, one produces the following deduction.
\begin{small}
\begin{equation}
\tau(\mathbf{1}, \varphi\rho(X)) \cong \tau(\varphi(\mathbf{1}), \varphi\rho(X)) \cong \delta(\mathbf{1}, \rho(X))\cong \tau(\mathbf{1}, \varrho \rho(X))
\end{equation}
\end{small}

Because $\mathbf{1}$ is an extremal separator in $\tau$, then $\bigvee \{ x:\mathbf{1} \rightarrow \varrho\rho(X)\} \cong \varrho\rho(X)$ and $\bigvee \{ x:\mathbf{1} \rightarrow \varrho\rho(X)\} \cong \varphi\rho(X)$. Because $\tau(\mathbf{1}, \varphi\rho(X)) \cong \tau(\mathbf{1}, \varrho \rho(X))$ one may re-index the union, to form the following deduction.
\begin{small}
\begin{equation}
\varphi\rho(X) \cong \bigvee \{ x:\mathbf{1} \rightarrow \varphi\rho(X)\} \cong \bigvee \{ x:\mathbf{1} \rightarrow \varrho\rho(X)\} \cong \varrho\rho(X)
\end{equation}
\end{small}
Where the isomorphism corresponds to the following map.
\begin{equation}
(\varphi(x) : \mathbf{1} \rightarrow \varphi\rho(X)) \mapsto (\varrho(x) : \mathbf{1} \rightarrow \varrho\rho(X))
\end{equation}

This map can be used to show that the following diagram commutes through a diagram chase.

\begin{equation}
\begin{tikzcd}[column sep=large]
	{\varphi\rho(X)} && {\varphi\rho(Y)} \\
	& {\mathbf{1}} \\
	{\varrho\rho(X)} && {\varrho\rho(Y)}
	\arrow[""{name=0, anchor=center, inner sep=0}, "{\varphi\rho(f)}", from=1-1, to=1-3]
	\arrow[""{name=1, anchor=center, inner sep=0}, "\sim"', from=1-1, to=3-1]
	\arrow[""{name=2, anchor=center, inner sep=0}, "\sim", from=1-3, to=3-3]
	\arrow["{\varphi(x)}"{description}, from=2-2, to=1-1]
	\arrow["{\varphi(\rho(f)x)}"{description}, from=2-2, to=1-3]
	\arrow["{\varrho(x)}"{description}, from=2-2, to=3-1]
	\arrow["{\varrho(\rho(f)x)}"{description}, from=2-2, to=3-3]
	\arrow[""{name=3, anchor=center, inner sep=0}, "{\varrho\rho(f)}"', from=3-1, to=3-3]
	\arrow["{\circlearrowleft_4}"{description}, draw=none, from=2-2, to=0]
	\arrow["{	\circlearrowleft_1}"{description, pos=0.7}, draw=none, from=2-2, to=1]
	\arrow["{\circlearrowleft_3}"{description, pos=0.7}, draw=none, from=2-2, to=2]
	\arrow["{\circlearrowleft_2}"{description}, draw=none, from=2-2, to=3]
\end{tikzcd}
\end{equation}

The cells $\circlearrowleft_1$ and $\circlearrowleft_3$ commute due to the construction of the isomorphism mapping. The cells $\circlearrowleft_2$ and $\circlearrowleft_4$ commute because $\varphi$ and $\varrho$ are functors. Because $\mathbf{1}$ is a generator, and the perimeter morphisms commute for every element, then they must commute in general. Because each of the isomorphisms may also be written as $F(X) \cong \varrho\rho(X)$, they form a natural family of isomorphisms, with the perimeter of the diagram being the naturality square for the natural isomorphism $F \cong \varrho\rho$.
\end{proof}

\begin{corollary}
Given a functor $F:C \rightarrow \tau$ where $\tau$ is a well-pointed infinitary coherent category, there exists an infinitary coherent category $\delta$ with functor $\rho: C\rightarrow \delta$ such that for any geometric functor $\varrho: \delta \rightarrow \tau$ there exists a natural isomorphism $F \cong \varrho\rho$.
\end{corollary}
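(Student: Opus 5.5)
The plan is to rerun the proof of Thm \ref{theorem:hard_coding} in the infinitary (geometric) setting. The role of $\kappa$ is taken by the fact that an infinitary-coherent category has unions of \emph{all} families of subobjects, and a geometric functor preserves all of them. With this replacement the $\kappa$-smallness hypothesis on $\tau(\mathbf{1},F(X))$ becomes unnecessary. The steps are as follows.

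\textbf{Step 1: construct $\delta$.} Apply the Corollary following Lemma \ref{lemma:smallest_geo_sub}. Since $\tau$ is small, the replete infinitary-coherent subcategories of $\tau$ containing the image of $F$ and all global elements $x:\mathbf{1}\rightarrow F(X)$ form a small preorder. Its intersection (limit in $Cat$) gives $\varphi:\delta\rightarrow\tau$. Repeating Lemma \ref{lemma:lift_from_C_to_delta}, we obtain $\rho:C\rightarrow\delta$ with $\varphi\rho=F$.

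\textbf{Step 2: the geometric analogue of Lemma \ref{lemma:isomorphic_component}.} For any geometric $\varrho:\delta\rightarrow\tau$ we show $\delta(\mathbf{1},\rho(X))\cong\tau(\mathbf{1},\varrho\rho(X))$.
\begin{itemize}
\item \emph{Injectivity.} It uses only that $\varrho$ preserves pullbacks and the empty union (so $\varrho(\varnothing)\cong\varnothing$), together with $\varnothing\not\cong\mathbf{1}$ in $\tau$.
\item \emph{Surjectivity.} By extremality of $\mathbf{1}$ in $\tau$, $F(X)\cong\bigvee\{z:\mathbf{1}\rightarrow F(X)\}$. This union now exists, since all unions exist, and is preserved by $\varphi$ with no size condition. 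Repleteness (isofibration) lifts it to $\rho(X)\cong\bigvee\{x:\mathbf{1}\rightarrow\rho(X)\}$ in $\delta$. Then $\varrho$ preserves this arbitrary union, and the pullback argument with a hypothetical global element $y$ of $\varrho\rho(X)$ outside the image gives the contradiction exactly as before.
\end{itemize}

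\textbf{Step 3: assemble the natural isomorphism.} Follow the proof of Thm \ref{theorem:hard_coding}.
\begin{itemize}
\item Faithfulness of $\varphi$ and the inclusion of all global elements give $\tau(\mathbf{1},\varphi\rho(X))\cong\delta(\mathbf{1},\rho(X))\cong\tau(\mathbf{1},\varrho\rho(X))$.
\item Re-indexing the (arbitrary) union of global elements yields $F(X)=\varphi\rho(X)\cong\varrho\rho(X)$, via $\varphi(x)\mapsto\varrho(x)$.
\item Naturality follows from the same four-cell diagram chase, closed off by $\mathbf{1}$ being a generator.
\end{itemize}

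The main obstacle is the size and existence bookkeeping in Steps 1 and 2. We must check two things. First, the intersection of replete infinitary-coherent subcategories is again closed under \emph{arbitrary} unions and is still replete. Second, the union of global elements, which may be large relative to any fixed $\kappa$, is both formed in $\tau$ and preserved by $\varphi$ and $\varrho$. Both follow because $\tau$ is small, so every family of subobjects is a set, and because geometric functors preserve all unions. I would state this explicitly at the point where the $\kappa$-small hypothesis was used previously. The remaining steps carry over verbatim.
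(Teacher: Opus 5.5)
Your proposal follows the paper's own route: the paper gives no separate argument for this corollary, which is meant to be obtained exactly as you do, by rerunning the infinitary corollary of Lemma \ref{lemma:smallest_geo_sub}, Lemma \ref{lemma:isomorphic_component} and the proof of Theorem \ref{theorem:hard_coding} with arbitrary unions in place of $\kappa$-small ones, which is what lets the hypothesis on $\tau(\mathbf{1},F(X))$ be dropped. Two small points are worth tidying: the statement does not assume $\tau$ small while your Step~1 does (as the paper's preceding corollary does), but since Steps~2--3 never use minimality of $\delta$ you can handle any locally small $\tau$ by taking $\delta=\tau$, $\rho=F$, $\varphi=Id_\tau$; and the ``re-indexing'' in Step~3 needs the fact that in a geometric category a union of pairwise disjoint subobjects is their coproduct (distinct global elements are disjoint by your injectivity argument), so that $F(X)$ and $\varrho\rho(X)$ are both coproducts of copies of $\mathbf{1}$ over bijective index sets and $\varphi(x)\mapsto\varrho(x)$ induces an actual isomorphism in $\tau$ rather than just a bijection of global elements.
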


%% file: sections/ml_models.tex
\section{Machine Learning Models}
\label{section:mlmodels}

To those familiar with formal theories and model theories, it will appear intuitively true that any machine learning algorithm may be represented as a model of formal theories. Formal systems and type theories provide the language in which any mathematically rigorous statement is made, suggesting that if one can describe a machine learning architecture rigorously, then it is describable (and is a model) of a theory in some formal system. An even more practical intuition may be derived by drawing on the connection between type theories and programming languages \cite{coecke_physics_2010} by suggesting that any programmatic implementation of machine learning algorithm is already a description of it within a formal theory. The intention of this section is to provide one concrete methodology by which a class of machine learning architectures, those describable as feed forward networks with tied or fixed weights \footnote{This is a surprisingly wide reaching subset of machine learning algorithms capable of describing dense layers, sparse layers, convolutional layers, kernel svms, etc.}, are presented as models of a coherent theory. Model inference is then presented as a functor which is induced via the precomposition of an interpretation between formal theories. The processes presented in this section are demonstrated for implementations of neural network which utilise floating point arithmetic \footnote{A finite approximation of a portion of real number arithmetic.}. This is partly because the following constructions utilise (co)limit constructions in $Coh_\sim$ requiring a proof of their existence (Thm \ref{theorem:bicomplete})\footnote{It should also be noted that implementations of machine learning algorithms typically use floating point arithmetic rather than an exact real arithmetic. These implementations produce demonstrably different behaviour than what is predicted by theory. On principle, it may be worth considering modifying the standard practice in the literature such that theory more directly reflects the experimental setup. This could be done by modifying the definitions given in papers such that the (co)domains of functions are defined relative to the set of floating point numbers, or by testing algorithms using exact real arithmetic implementations. One possible compromise may be to define and test ML algorithms using the rational numbers, where both the theory and exact implementations are practical to work with.}. The hard coding of the information in the arithmetic requires a $\kappa$-coherent theory for a $\kappa$ small set, limiting to the construction of finite sets until a proof for the bi-completeness of $\kappa\text{-}Coh_\sim$ is found\footnote{Such a proof is likely to be found as a direct generalisation of that shown in \cite{kanalas_21-category_2021} but is beyond the scope of this work}.

\begin{lemma}
\label{lemma:float}
There exists a coherent category $Float\text{-}\sigma$ of floating point arithmetic with an activation function $\sigma$, whose models in $Set$ contain a model of floating point arithmetic and the activation function.
\end{lemma}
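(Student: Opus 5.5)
The plan is to obtain $Float\text{-}\sigma$ as an instance of the Hard Coding theorem (Thm \ref{theorem:hard_coding}) with $\tau = FinSet$ and $\kappa = \aleph_0$. First I encode floating point arithmetic with activation as a functorial database. Let $\mathbb{F}$ be the finite set of floating point values, including any special values such as NaN and $\pm\infty$, so that every operation is total. Let $C$ be the free category (schema, in the sense of Thm \ref{theorem:schema}) on the graph with objects $F$, $F^2$ and $F^3$ and arrows $+,\times : F^2 \rightarrow F$ and $\sigma : F \rightarrow F$. I also include the product projections $\pi_i : F^2 \rightarrow F$ and $F^3 \rightarrow F$, plus the composites needed to state associativity-like relations. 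These relations hold only approximately for floats, so I would impose only the equations that actually hold in the implementation. Define $F_0 : C \rightarrow FinSet$ by $F_0(F)=\mathbb{F}$, $F_0(F^n)=\mathbb{F}^n$, sending each arrow to the corresponding float operation, each projection to the set projection, and $\sigma$ to the implemented activation. Each $FinSet(\mathbf{1}, F_0(X)) \cong F_0(X)$ is finite, hence $\aleph_0$-small.

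Next I check the hypotheses on $\tau$. $FinSet$ is essentially small, so I replace it by a small skeleton, which changes nothing up to equivalence. It is coherent: finite limits, images and finite unions are computed as in $Set$, and all of these are pullback stable. It is well pointed, since $\varnothing \not\cong \mathbf{1}$ and the point $\mathbf{1}$ detects equality of functions. It is also an extremal generator, because a subset containing every element is the whole set. Thm \ref{theorem:hard_coding} then gives a coherent category $\delta =: Float\text{-}\sigma$ and a functor $\rho : C \rightarrow \delta$ such that every coherent $\varrho : \delta \rightarrow FinSet$ satisfies $\varrho\rho \cong F_0$. So every such model contains, up to natural isomorphism, the float carrier, the operations and $\sigma$.

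The main obstacle is that the lemma speaks of models in $Set$, whereas Thm \ref{theorem:hard_coding} only controls coherent functors into $\tau$ itself. To bridge this, I would take a coherent $M : \delta \rightarrow Set$ and rerun the argument of Lemma \ref{lemma:isomorphic_component} with target $Set$. Injectivity on global elements only uses preservation of finite limits and of the empty union, together with $\varnothing \neq \mathbf{1}$ in $Set$. Surjectivity uses preservation of the finite union $\bigvee\{x : \mathbf{1} \rightarrow \rho(X)\} \cong \rho(X)$, which holds because the index set is finite and coherent functors preserve finite unions. It also uses the fact that $\mathbf{1}$ is an extremal generator in $Set$. I should also check that in $\delta$ the points $x \neq y$ are disjoint, so that their pullback is $\varnothing$; this holds because $\varphi$ reflects it from $FinSet$. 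So $M\rho(X) \cong F_0(X)$, and the diagram chase at the end of Thm \ref{theorem:hard_coding} gives naturality, hence $M\rho \cong F_0$.

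Finally, by Thm \ref{theorem:model_categories} the $Set$-models of the internal theory $Th(Float\text{-}\sigma)$ are exactly these coherent functors. Each of them contains the hard-coded arithmetic via $\rho$, which proves the statement.
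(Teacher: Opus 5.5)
Your argument is correct and uses the same core idea as the paper's proof: encode the float operations and $\sigma$ as a functor from a small schema into a well-pointed coherent category with finitely many global elements per object, and invoke the Hard Coding theorem. The difference is the choice of target.

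The paper takes $\tau = Set$, using the schema $Pol$ with $E \mapsto R \times R$ and $s,t,a \mapsto add, mul, \sigma$. The claim about $Set$-models then follows directly from Thm \ref{theorem:hard_coding}. However, this applies the theorem to a non-small $\tau$. Smallness is one of its hypotheses, and Lemma \ref{lemma:smallest_geo_sub} uses it to form the intersection of replete subcategories as a small limit in $Cat$.

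You take a small skeleton of $FinSet$ instead, which meets every hypothesis. You pay for this with a transfer step: rerunning Lemma \ref{lemma:isomorphic_component} and the final diagram chase for an arbitrary coherent $M : \delta \rightarrow Set$. That step is sound. It only needs three facts about the target: $\varnothing \not\cong \mathbf{1}$, distinct points are disjoint, and $\mathbf{1}$ is an extremal generator. It also needs two facts about $\delta$. First, the family of points of $\rho(X)$ is finite, so $M$ preserves its union. Second, by repleteness that union is the top subobject in $\delta$.

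What your route buys is a separation of two issues. Constructing $\delta$ needs smallness. The rigidity of the hard-coded data holds for models in any such $Set$-like target. The paper's route is shorter, but it is only formally justified.

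Including the projections in your schema is harmless and makes the product structure on $F^2$ explicit. One citation fix: identifying $Set$-models of $Float\text{-}\sigma$ with coherent functors into $Set$ is Thm \ref{theorem:model_categories_th}. Thm \ref{theorem:model_categories} concerns $Syn(T)$.
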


\begin{proof}
Consider the category $Pol := \{V\xleftarrow[]{s}E\xrightarrow[]{t}V\xrightarrow[]{a}V\}$. Fix a model of the floating point numbers $R$ for some number of bits and a choice of activation function. Define a functor $F : Pol \rightarrow Set$ such that $F(V)=R$,  $F(E) = R\times R$, $F(s) = add : R \times R \rightarrow R$, $F(t) = mul : R \times R \rightarrow R$, and $F(a) = \sigma : R \rightarrow R$. Set is a well-pointed coherent category, and $R$ is a finite set making $Set(\mathbf{1}, F(X))$ finite, $\aleph_0$-small, implying  by Thm \ref{theorem:hard_coding} that there exists a coherent category $Float\text{-}\sigma$, a coherent functor $\psi : Float\text{-}\sigma \rightarrow Set$, and a functor $\rho : Pol \rightarrow Float\text{-}\sigma$ such that $\psi\rho = F$.
\end{proof}

\begin{corollary}
\label{corollary:float}
When $\sigma = Id_R$, then Lemma \ref{lemma:float} produces a coherent category for floating point numbers $Float$ with the obvious coherent functor $[\sigma] : Float \rightarrow Float\text{-}\sigma$ for any other choice of $\sigma$.
\end{corollary}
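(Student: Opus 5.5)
The plan is to obtain $Float$ by running the construction of Lemma \ref{lemma:float} with $\sigma = Id_R$, and then to produce $[\sigma]$ from the minimality clause of Lemma \ref{lemma:smallest_geo_sub}. With $\sigma = Id_R$ we get a functor $F_{Id} : Pol \rightarrow Set$ with $F_{Id}(a) = Id_R$. The hypotheses of Thm \ref{theorem:hard_coding} hold exactly as in Lemma \ref{lemma:float}: $Set$ is well pointed and $R$ is finite. Applying it yields $Float := \delta_{Id}$, the smallest replete coherent subcategory containing the image of $F_{Id}$ and all global elements of $F_{Id}(V)$ and $F_{Id}(E)$. Because $Set$ itself is not small, I will work inside a small well-pointed full coherent subcategory $\tau \subseteq Set$, for instance the finite sets closed under isomorphism representatives, which contains $R$ and $R \times R$. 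The same choice is implicitly needed in Lemma \ref{lemma:float}.

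The key observation is that the image of $F_{Id}$ lies inside the image of $F_\sigma$ together with identities. The two functors agree on objects and on $s$ and $t$, and $F_{Id}(a) = Id_R$ is an identity morphism on an object already in the image of $F_\sigma$. The global elements required in the two cases are identical, since they are the elements of $R$ and $R \times R$. So the subcategory $\delta_\sigma$ that defines $Float\text{-}\sigma$ is a replete coherent subcategory of $\tau$ satisfying every condition that defines the objects of the preorder $\Lambda$ for $F_{Id}$. By minimality, namely the limit or intersection in the proof of Lemma \ref{lemma:smallest_geo_sub}, there is an inclusion $Float = \delta_{Id} \hookrightarrow \delta_\sigma = Float\text{-}\sigma$ over $\tau$, that is $\varphi_\sigma \circ [\sigma] = \varphi_{Id}$. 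Take this inclusion as $[\sigma]$.

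It remains to check that $[\sigma]$ is coherent. Both $\delta_{Id}$ and $\delta_\sigma$ are coherent subcategories whose inclusions $\varphi_{Id}$ and $\varphi_\sigma$ into $\tau$ preserve finite limits, images and finite unions. Take a limit, image or union in $\delta_{Id}$. Its image under $\varphi_{Id} = \varphi_\sigma[\sigma]$ is the corresponding construction in $\tau$. By repleteness of $\delta_\sigma$, and because $\varphi_\sigma$ reflects these constructions (it is a full-on-subobjects replete inclusion created as an intersection of coherent subcategories), the image under $[\sigma]$ is the corresponding construction in $\delta_\sigma$. Hence $[\sigma]$ is coherent. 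As a sanity check, composing with the models: for any coherent $\varrho : Float\text{-}\sigma \rightarrow Set$, the composite $\varrho[\sigma]$ is a model of $Float$, and Thm \ref{theorem:hard_coding} gives $\varrho[\sigma]\rho_{Id} \cong F_{Id}$ on $V, E, s, t$. This shows that $[\sigma]$ restricts models of $Float\text{-}\sigma$ to their underlying arithmetic.

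I expect the coherence check in the previous paragraph to be the main obstacle. Inclusions of subcategories need not preserve or reflect limits in general. The argument needs the fact that the coherent structure of each $\delta$ is inherited from $\tau$, which means the coherent operations are computed in $\tau$ and are closed in $\delta$. That fact is implicit in the construction of Lemma \ref{lemma:smallest_geo_sub} as an intersection of coherent subcategories, and I would make it explicit there first. A secondary subtlety is the interpretation of ``obvious'': one could alternatively define $[\sigma]$ on the generating part by $\rho_{Id}(a) \mapsto Id$. The inclusion route avoids having to verify well-definedness against relations.
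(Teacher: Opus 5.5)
The paper gives no proof of this corollary; it simply calls $[\sigma]$ ``obvious''. Your argument therefore supplies a proof rather than reproducing one, and it is correct in substance.

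Taking $[\sigma]$ to be the inclusion $\delta_{Id}\subseteq\delta_\sigma$ given by minimality in Lemma \ref{lemma:smallest_geo_sub} is a good way to pin down ``obvious''. From $\varphi_\sigma[\sigma]=\varphi_{Id}$ and injectivity of $\varphi_\sigma$ you get $[\sigma]\rho_{Id}=\rho_\sigma$ on $V,E,s,t$. This is exactly the compatibility the paper later relies on when $j[\sigma]$ enters the co-cone that produces $\iota_{(\sigma,n,m)}$.

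Your switch to a small $\tau$ also repairs a real slip in Lemma \ref{lemma:float}, which applies Thm \ref{theorem:hard_coding} to the non-small $Set$. One caveat: a small $\tau$ cannot be closed under isomorphism in $Set$. So take $\tau$ to be a small full subcategory of finite sets containing $R$, $R\times R$ and a set of each finite cardinality, and read repleteness relative to $\tau$.

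On the coherence step, the appeal to ``full-on-subobjects'' is neither established nor needed. You only need two facts. First, $\varphi_{Id}$ and $\varphi_\sigma$ preserve the coherent structure. Second, $\varphi_\sigma$ reflects isomorphisms, which is just repleteness: a $\tau$-isomorphism out of an object of $\delta_\sigma$ lies in $\delta_\sigma$. Now take a finite limit, image or finite union in $\delta_{Id}$. The canonical comparison map in $\delta_\sigma$ between its image under $[\sigma]$ and the corresponding $\delta_\sigma$-construction is a $\tau$-isomorphism, hence a $\delta_\sigma$-isomorphism.

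In this concrete situation the obstacle you flag disappears entirely, because $\delta_{Id}$ already contains $\sigma$. The points $(r,\sigma(r)):\mathbf{1}\rightarrow R\times R$ lie in $\delta_{Id}$. Their finite union is a mono $m:\Gamma\rightarrowtail R\times R$ onto the graph of $\sigma$. The map $\pi_1 m$ is a bijection whose inverse lies in $\delta_{Id}$ by repleteness, and $\sigma=\pi_2 m(\pi_1 m)^{-1}$. Minimality then also gives $\delta_\sigma\subseteq\delta_{Id}$. Hence $Float=Float\text{-}\sigma$ as subcategories of $\tau$, and your $[\sigma]$ is the identity functor. All dependence on $\sigma$ is carried by $\rho_\sigma$, not by the category itself.
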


To be clear about the objective of the construction, consider the following definition of a densely connected neural network architecture. Such an architecture is a composition of linear transformations and dimension wise applications of activation functions.

\begin{definition}[Dense Neural Network Architecture]
\label{definition:nna}
An architecture of depth $n$ is defined by an $n$-tuple $\vec{\sigma} = (\sigma_1, ... , \sigma_n)$ of possibly non-linear activation functions $\sigma_i : R \rightarrow R$ and an $(n+1)$-tuple $\vec{a} = (a_0, ..., a_n)$ of natural numbers. Then $NN(\vec{\sigma}, \vec{a})$ is the set of pairs $(\vec{w},\vec{\beta})$ where $\vec{w}$ is an $n$-tuple $\vec{w} = (w_1, ..., w_n)$ of weight matrices $w_i : R^{a_{i-1}} \rightarrow R^{a_i}$ and $\vec{\beta}$ is an $n$-tuple of bias vectors $\beta_i : \mathbf{1} \rightarrow R^{a_i}$.
\end{definition}

Weight fixing of a network is the assertion that some of its weights must take specific values. This prevents models of the network architecture from modifying this weight. A sparsely connected neural network may be modelled by fixing certain weights as zero. Weight tying is the assertion that two weights in the network must take the same value. Models such as convolution layer, which produce a linear transform utilising a convolution kernel applied over a space of inputs, can be represented as a dense layer with weights tied corresponding to the convolution kernel. This can be seen intuitively by realising that any linear map between finite vector spaces may be presented as a matrix and that the convolution action asserts that various sub matrices within the linear transform are the same. The information associated with both weight fixing and weight tying may be encoded through pairs of weights, biases, or values which represent the equalities satisfied by the weights and biases.

To represent the inference function as a functor, it is also important to present a rigours description of the output datasets. The data associated with a supervised training set is an indexed collection of pairs of inputs and outputs $\{(f(i),t(i))\}_{i\in N}$. A more categorical description may present this information as a span.

\begin{definition}[R-Span Datasets]
The set $RSpanData(n,m)$ is the set of all set function spans for finite $N$.\[R^n\xleftarrow[]{f}N\xrightarrow[]{t}R^m\]
\end{definition}

The inference function computes a dataset from trained machine learning models (models of the architecture). This dataset simply represents the neural networks evaluation at every point in its domain.

\begin{definition}[Inference Function]
\label{definition:inference}
For a given architecture $NN(\vec{\sigma}, \vec{a})$, the inference function $Inf : NN(\vec{\sigma}, \vec{a}) \rightarrow RSpanData(\vec{a}_0,\vec{a}_n)$ maps a weight $n$-tuple $\vec{w}$ to the following span. \[R^n\xleftarrow[]{Id}R^n\xrightarrow[]{t}R^m\] Where $t(x) = \sigma_n(w_n...\sigma_1(w_1x+\beta_1)+\beta_n)$.
\end{definition}

By the definition of the architecture as a list of transfer functions and layer widths, the composition of architectures may be described directly through the concatenation of lists. As these lists are generated by the collection of singleton lists, the space of dense neural networks may be produced directly for single layer networks (networks with a one weight matrix, i.e. the length of $\vec{a}$ is 2).

\begin{definition}[Neural Network Composition]
Given two architectures $NN(\vec{\sigma}, \vec{a})$ and $NN(\vec{\sigma}', \vec{a}')$ of lengths $n$ and $m$ such that $a'_0 = a_n$ then the composite $NN(\vec{\sigma}, \vec{a})\circ NN(\vec{\sigma}', \vec{a}')$ is the architecture $NN(\vec{\sigma}'', \vec{a}'')$ of length $n+m$, where $\vec{\sigma}''$ and $\vec{a}''$ are defined as follows.
\begin{gather}
\sigma''_i = \begin{cases} 
      \sigma_i & 1\leq i\leq n \\
      \sigma'_{i-n} & n<i\leq n+m \\
   \end{cases}\\
a''_i = \begin{cases} 
      a_i & 0\leq i\leq n \\
      a'_{i-n} & i>n \\
   \end{cases}
\end{gather}
\end{definition}

The construction of architectures as formal theories will stitch the basic components together using (co)limits of $Coh_\sim$. This can be done by identifying diagrams inside relevant categories which are to be equated. However, the objects of $Coh_\sim$ are coherent categories, so the relevant diagrams shapes need to correspond with some coherent category. This is where Thm \ref{theorem:schema} may be used to construct coherent categories whose coherent functors may identify diagrams from other coherent categories. The following definitions name such categories which represent particularly useful diagram shapes.

\begin{definition}[Sort]
\label{definition:category_sort}
Let $Sort$ be the coherent category \[SynSch(\mathbf{1})\]
\end{definition}

\begin{definition}[Oper]
\label{definition:category_oper}
Let $Oper$ be the coherent category \[SynSch(\{A\xrightarrow[]{g}B\})\]
\end{definition}

\begin{definition}[Span]
\label{definition:category_span}
Let $Span$ be the coherent category \[SynSch(\{X\xleftarrow[]{f}N\xrightarrow[]{t}Y\})\]
\end{definition}

An R-Span datasets is merely a span whose codomains are self products of the model of a given model of the floating point numbers. To construct a coherent theory such that its functors are models of R-Span datasets, one must make sure it carries or contains the data of both $Span$ and $Float$. This can be done by fusing the two categories together with a colimit.

\begin{lemma}
A coherent category $RSpan(n,m)$ exists whose set models contain R-Span Datasets.
\end{lemma}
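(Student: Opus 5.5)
The plan is to build $RSpan(n,m)$ as a 2-colimit in $Coh_\sim$, which exists by Thm \ref{theorem:bicomplete}, gluing $Span$ to the $n$-fold and $m$-fold self products of the floating point sort in $Float$. The first step is to produce coherent functors that pick out these objects. By Thm \ref{theorem:schema}, coherent functors $Sort \rightarrow \mathcal{E}$ correspond, up to equivalence, to objects of $\mathcal{E}$. In $Float$ I will take $V := \rho(V)$, the object of Lemma \ref{lemma:float} and Corollary \ref{corollary:float}. Because $Float$ has finite limits, the products $V^n$ and $V^m$ exist, giving coherent functors $p_n, p_m : Sort \rightarrow Float$. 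On the $Span$ side, the objects $X$ and $Y$ give coherent functors $x, y : Sort \rightarrow Span$.

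Next I form the diagram in $Coh_\sim$ consisting of the functor $x + y : Sort + Sort \rightarrow Span$ and the functor $[p_n, p_m] : Sort + Sort \rightarrow Float$, where $+$ is the 2-coproduct. Equivalently, I can take two successive 2-pushouts along $Sort$. I define $RSpan(n,m)$ to be the 2-pushout (bicategorical pushout) of this cospan, which exists by Thm \ref{theorem:bicomplete}.

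The main step is to identify the $Set$-models. By the universal property of the 2-pushout, $Coh(RSpan(n,m), Set)$ is equivalent to the pseudo-pullback of $Coh(Span, Set)$ and $Coh(Float, Set)$ over $Coh(Sort+Sort, Set) \simeq Set \times Set$. By Thm \ref{theorem:schema}, $Coh(Span, Set)$ is equivalent to the category of spans $X \leftarrow N \rightarrow Y$ of sets. So a $Set$-model amounts to three pieces of data: a span, a coherent functor $\varrho : Float \rightarrow Set$, and isomorphisms $X \cong \varrho(V)^n$ and $Y \cong \varrho(V)^m$. Here $\varrho(V^n) \cong \varrho(V)^n$ because coherent functors preserve finite limits. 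By the Hard Coding theorem (Thm \ref{theorem:hard_coding}, applied as in Lemma \ref{lemma:float}), every such $\varrho$ satisfies $\varrho\rho \cong F$, so $\varrho(V) \cong R$. Consequently every $Set$-model contains a span $R^n \leftarrow N \rightarrow R^m$, up to the given isomorphisms.

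Conversely, each $R$-span dataset, paired with the coherent functor $\psi$ of Lemma \ref{lemma:float}, defines an object of the pseudo-pullback, so every such dataset is realised by a model. Finiteness of $N$ is not enforced by this construction. That is acceptable here because the lemma only claims that the set-models \emph{contain} the R-Span datasets, not that they coincide with them.

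I expect the main obstacle to be making the description of 2-pushouts in $Coh_\sim$ precise, namely the claim that $Coh(-,Set)$ sends the 2-pushout to a pseudo-pullback of model categories. This should follow from the representable $Coh_\sim(-, Set)$ turning 2-colimits into 2-limits, with all 2-cells invertible. A secondary subtlety is that the hard-coded isomorphisms are only natural isomorphisms, so the identification of $\varrho(V)$ with $R$ holds only up to isomorphism, and the statement has to be read up to this equivalence.
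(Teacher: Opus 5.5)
Your proposal is correct and follows the paper's proof: the paper builds $RSpan(n,m)$ as the same 2-colimit in $Coh_\sim$, gluing $X, Y \in Span$ to $\rho(V)^n, \rho(V)^m \in Float$ (your 2-pushout over $Sort + Sort$ is that diagram). It then identifies the induced $Span$-model of any $Set$-model with a span $R^n \leftarrow N \rightarrow R^m$ via Thm \ref{theorem:schema}, the Hard Coding theorem, and the invertible 2-cells of the colimit. Your converse (every R-span dataset is realised by a model, via the universal property with $\psi$) and your remark that finiteness of $N$ is not enforced go slightly beyond the paper, which argues only the forward direction and passes over finiteness.
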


\begin{proof}
By construction (Lemma \ref{lemma:float}), the functor $\varphi\rho : Pol \rightarrow Set$ factors through the functor $\rho : Pol \rightarrow Float\text{-}\sigma$ and the coherent functor $\varphi : Float\text{-}\sigma \rightarrow Set$ where $\varphi\rho(V) = R$ is the set of floating point numbers. In this context, $\rho(V)$ can be interpreted as an internal object of floating point numbers of $Float\text{-}\sigma$. As coherent categories contain all limits and there exists $\rho(V)\in Float\text{-}\sigma$ then there also exists $\rho(V)^n\in Float\text{-}\sigma$. By Def \ref{definition:category_sort} and Thm \ref{theorem:schema}, $Float\text{-}\sigma \simeq Coh(Sort, Float\text{-}\sigma)$, which implies the existence of coherent functors $\rho(V)^n,\rho(V)^m  : Sort \rightarrow Float\text{-}\sigma$ determined up to isomorphism. Similarly, by Def \ref{definition:category_span}, there exists objects $X, Y \in Span$, which implies the existence of coherent functors $X, Y: Sort \rightarrow Span$. These coherent functors may be used to form the $2$-colimit in $Coh_\sim$ by Thm \ref{theorem:bicomplete} (The 2-colimit diagram commutes and factors up to 2-cell isomorphism).

\begin{equation}
\begin{tikzcd}
	& Sort \\
	Sort && {Float} \\
	& Span & {RSpan(n,m)}
	\arrow["{\rho(V)^m}", from=1-2, to=2-3]
	\arrow["Y"{pos=0.7}, from=1-2, to=3-2]
	\arrow[""{name=0, anchor=center, inner sep=0}, "{\rho(V)^n}"{pos=0.7}, from=2-1, to=2-3]
	\arrow["X"', from=2-1, to=3-2]
	\arrow["{i_0}", from=2-3, to=3-3]
	\arrow["{i_1}", from=3-2, to=3-3]
	\arrow["\ulcorner"{anchor=center, pos=0.125, rotate=180}, draw=none, from=3-3, to=0]
\end{tikzcd}
\end{equation}

Consider a set model of $RSpan(n,m)$ called $M$. By Thm \ref{theorem:model_categories_th} $Th(RSpan(n,m))\text{-}mod(Set)$ is isomorphic to $Coh(RSpan(n,m), Set)$ allowing $M$ to be identified with a coherent functor $M : RSpan(n,m) \rightarrow Set$. The functor $i_1 : Span \rightarrow RSpan(n, m)$ induces a functor $Coh(i_1, Set) : Coh(RSpan(n,m), Set) \rightarrow Coh(Span(n, m), Set)$ showing that any $RSpan(n,m)$-model induces a $Span$-model. Similarly, any $RSpan(n,m)$-model induces a $Float\text{-}\sigma$-model. By Corollary \ref{corollary:float} and Thm \ref{theorem:hard_coding}, every $Float\text{-}\sigma$ model has a natural isomorphism to the floating point numbers, therefore $Mi_0\rho(V)^n \cong R^n$ and $Mi_0\rho(V)^m \cong R^m$. As the colimit diagram commutes up to isomorphism, this also implies that $Mi_1X \cong R^n$ and $Mi_1Y \cong R^m$ demonstrating that the $Span$-model induced by $M$ is an R-Span Dataset.
\end{proof}

Having constructed a suitable coherent theory for R-span datasets, the approach to constructing all neural network architectures will be to construct a suitable definition for a single layer network and demonstrate how layers may be composed. As any dense feed forward neural network is a composition of single layers this will demonstrate that all dense neural networks may be presented as models of coherent theories. An appropriate model of single layer networks must also admit a suitable interpretation of $RSpan(n,m)$ through which the inference functor may be induced. Both of these constraints may be satisfied at once by the following lemma.

\begin{lemma}
\label{lemma:coherent_nnl}
The inference functor for a floating point dense neural network layer may be represented as the exponential $Coh(\iota_{(\sigma,n,m)}, Set)$ of a coherent functor $\iota_{(\sigma,n,m)} : RSpan(n,m) \rightarrow \sigma NNL(n, m)$.
\end{lemma}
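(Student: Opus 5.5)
We construct $\sigma NNL(n,m)$ as a 2-colimit in $Coh_\sim$ (Thm \ref{theorem:bicomplete}). Its $Set$-models should be exactly the single-layer networks $(w,\beta)$, and it should contain a canonical span $R^n \leftarrow R^n \rightarrow R^m$ computing $x \mapsto \sigma(wx+\beta)$. The functor $\iota_{(\sigma,n,m)}$ is then induced by the universal property of $RSpan(n,m)$, which is itself a 2-pushout.

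\textbf{Weights and biases.} Work inside $Float\text{-}\sigma$ with $V := \rho(V)$. The weights and biases are $nm+m$ free elements of $V$. To make them free parameters of the theory, rather than hard-coded data, we adjoin them by a 2-colimit. Take the coproduct in $Coh_\sim$ of $Float\text{-}\sigma$ with $nm+m$ copies of $SynSch(\{\mathbf{1}\xrightarrow{c}V\})$, that is, a schema with a generic global element. Glue each copy's $V$ to $V\in Float\text{-}\sigma$ and its $\mathbf{1}$ to the terminal object, using 2-pushouts along $Sort$ in the same way as in the proof for $RSpan(n,m)$.

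A coherent functor preserves the terminal object, and by Cor.\ \ref{corollary:float} and Thm \ref{theorem:hard_coding} every $Set$-model sends $V$ to a set isomorphic to $R$. Hence a model of the result is, up to isomorphism, exactly a choice of $w_{ij},\beta_j\in R$. This identifies the objects of $Coh(\sigma NNL(n,m),Set)$ with $NN(\sigma,(n,m))$.

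\textbf{The inference span.} In the colimit category $\sigma NNL(n,m)$, the products $V^n$ and $V^m$ exist because the category has finite limits. Using the morphisms $\rho(s)$ (add), $\rho(t)$ (mul) and $\rho(a)$ ($\sigma$), the chosen constants, and projections and pairings, we build a morphism $T:V^n\to V^m$. Its $j$-th component is
\[\sigma\Bigl(\textstyle\sum_i w_{ij}x_i+\beta_j\Bigr),\]
written as a finite composite of add and mul with a fixed bracketing that matches the floating point implementation. The span $V^n\xleftarrow{Id}V^n\xrightarrow{T}V^m$ gives a coherent functor $Span\to\sigma NNL(n,m)$, by Thm \ref{theorem:schema} applied to the shape $\{X\leftarrow N\rightarrow Y\}$. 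The inclusion $Float\text{-}\sigma\to\sigma NNL(n,m)$, precomposed with $[\sigma]$ from Cor.\ \ref{corollary:float}, gives a functor out of $Float$. These two functors agree on the two copies of $Sort$ up to specified isomorphism. The 2-universal property of $RSpan(n,m)$ then yields $\iota_{(\sigma,n,m)}$.

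\textbf{Identifying $Coh(\iota,Set)$ with $Inf$.} Let $M:\sigma NNL(n,m)\to Set$ be a model with parameters $(w,\beta)$. Then $M\iota$ restricted along $i_1$ is the span $M(V)^n\xleftarrow{Id}M(V)^n\xrightarrow{M(T)}M(V)^m$. Since $M$ preserves finite products and the interpreted operations coincide, up to the natural isomorphism of Thm \ref{theorem:hard_coding}, with the true add, mul and $\sigma$, we get $M(T)=t$ as in Def \ref{definition:inference}. Transporting along this isomorphism gives $Coh(\iota,Set)(M)\cong Inf(w,\beta)$.

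\textbf{Main obstacle.} The hard step is the freeness argument. The 2-colimit must add no further identifications, so that every tuple $(w,\beta)\in R^{nm+m}$ is realised by some coherent functor, and models that induce the same tuple must be isomorphic. Both facts come from the universal property: a model of the colimit is equivalently a compatible family of models of the pieces. I will verify this by applying $Coh(-,Set)$ to the colimit diagram, which turns it into a 2-limit of model categories. I also need to check that the hard-coded $Float\text{-}\sigma$ forces the interpretation of $V$ to be isomorphic to $R$, compatibly with the chosen constants. I expect this to follow from naturality of the isomorphism in Thm \ref{theorem:hard_coding} on global elements.
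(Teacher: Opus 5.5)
Your proposal is correct and follows essentially the same route as the paper. You build $\sigma NNL(n,m)$ as a 2-colimit gluing walking-arrow schemas onto $Float\text{-}\sigma$, assemble the layer map from add, mul and $\sigma$, and turn the span $R^n \xleftarrow{Id} R^n \rightarrow R^m$ into a functor out of $Span$. Together with $j[\sigma]$, this gives $\iota_{(\sigma,n,m)}$ by the universal property of $RSpan(n,m)$.

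The differences are cosmetic. The paper adjoins two generic global elements, $w:\mathbf{1}\to(R^n)^m$ and $\beta:\mathbf{1}\to R^m$, using two copies of $Oper$, rather than your $nm+m$ scalar ones; this is equivalent because global elements of a product are tuples. The paper also leaves implicit the freeness check and the fixed bracketing of the floating point sums that you make explicit.
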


\begin{proof}
Construct $\sigma NNL(n,m)$ as the following 2-colimit, by Thm \ref{theorem:bicomplete}.
\begin{equation}
\begin{tikzcd}
	Sort && One && Sort \\
	Oper && {Float\text{-}\sigma} && Oper \\
	&& {\sigma NNL(n,m)}
	\arrow["B"', from=1-1, to=2-1]
	\arrow["{(\rho(V)^n)^m}"{pos=0.1}, from=1-1, to=2-3]
	\arrow["A"{pos=0.8}, from=1-3, to=2-1]
	\arrow["{\mathbf{1}}"{pos=0.4}, from=1-3, to=2-3]
	\arrow["A"'{pos=0.8}, from=1-3, to=2-5]
	\arrow["{\rho(V)^m}"'{pos=0.1}, from=1-5, to=2-3]
	\arrow["B", from=1-5, to=2-5]
	\arrow["w", from=2-1, to=3-3]
	\arrow["j", from=2-3, to=3-3]
	\arrow["\beta", from=2-5, to=3-3]
	\arrow["\ulcorner"{anchor=center, pos=0.125, rotate=180}, draw=none, from=3-3, to=1-1]
	\arrow["\ulcorner"{anchor=center, pos=0.125, rotate=90}, draw=none, from=3-3, to=1-5]
\end{tikzcd}
\end{equation}

The coherent functor $j : Float\text{-}\sigma \rightarrow \sigma NNL(n,m)$ induces a $Float\text{-}\sigma$ model in $\sigma NNL(n,m)$. For simplicity, refer to the operations and sorts of the induced float model via the functions and sets they represent. $j\rho(v) = R$, $j\rho(E) = R^2$, $j\rho(s) = add$, $j\rho(t) = mul$, and $j\rho(a) = \sigma$. Refer to the n-times self-categorical product of an object such as $R$ as $R^n$. For a morphism $f:X \rightarrow Y$ in a coherent category such as $\sigma NNL(n,m)$, there exists a canonical n-times product $f^n:= \langle f\pi_1, ..., f\pi_n \rangle: X^n \rightarrow Y^n$. For any object, such as $R$, There also exists an n-times copy map $\Delta_n := \langle Id_R, ..., Id_R \rangle : R \rightarrow R^n$. By observing that $R^n \cong R \times R^{n-1}$ one may infer the projection morphisms $\pi_1: R^n \rightarrow R$ and $\pi_{1<} : R^n \rightarrow R^{n-1}$. The operation $add_n$ can be defined inductively for $n \geq 1$ by stating that $add_1 = Id_R$ and using the following induction rule.
\begin{equation}
add_n(x) = add(\pi_1(x), add_{n-1}\pi_{1<}(x))
\end{equation}
By Def \ref{definition:category_oper} and Thm \ref{theorem:schema} the coherent functors $w : Oper \rightarrow \sigma NNL(n,m)$ and $\beta : Oper \rightarrow \sigma NNL(n,m)$ correspond to morphisms $w: \mathbf{1} \rightarrow (R^n)^m$ and $\beta : \mathbf{1} \rightarrow R^m$. Using these morphisms, the 'linear' map $L_{n,m} : R^n \rightarrow R^m$ may be defined by composition.
\begin{equation}
L_{(w,\beta)}(x) = add^m((add_n)^m (mul^n)^m(w, \Delta_m x),\beta)
\end{equation}
Making use of the transfer function $\sigma : R \rightarrow R$, the linear map may be used to produce the neural network layer $\sigma^m L_{(n,m)} : R^n \rightarrow R^m$. The neural network layer and the identity morphism $Id_{R^n} : R^n \rightarrow R^n$ represent a span diagram in $\sigma NNL(n,m)$. 
\begin{equation}
R^n\xleftarrow[]{Id}R^n\xrightarrow[]{\sigma^m L_{(w,\beta)}}R^m
\end{equation}
By Def \ref{definition:category_span} and Thm \ref{theorem:schema} the span diagram may be identified with the coherent functor $r_{(\sigma,n,m)} : Span \rightarrow \sigma NNL(n, m)$ which, in conjunction with the coherent functor $j$ (and $[\sigma]$ from Cor \ref{corollary:float}) produces a co-cone of the following 2-colimit diagram.
\begin{equation}
\begin{tikzcd}
	& Sort \\
	Sort && Float \\
	& Span & {RSpan(n,m)} \\
	&& {\sigma NNL(n,m)}
	\arrow["{\rho(V)^m}", from=1-2, to=2-3]
	\arrow["Y"{pos=0.7}, from=1-2, to=3-2]
	\arrow[""{name=0, anchor=center, inner sep=0}, "{\rho(V)^n}"{pos=0.7}, from=2-1, to=2-3]
	\arrow["X"', from=2-1, to=3-2]
	\arrow["{i_0}", from=2-3, to=3-3]
	\arrow["{j[\sigma]}", shift left=5, curve={height=-30pt}, from=2-3, to=4-3]
	\arrow["{i_1}", from=3-2, to=3-3]
	\arrow["{r_{(\sigma,n,m)}}"', from=3-2, to=4-3]
	\arrow["{\iota_{(\sigma,n,m)}}", from=3-3, to=4-3]
	\arrow["\ulcorner"{anchor=center, pos=0.125, rotate=180}, draw=none, from=3-3, to=0]
\end{tikzcd}
\end{equation}
By the universal factoring property of 2-colimits, the co-cone induced by $r_{(\sigma,n,m)}$ and $j$ factors through the coherent functor $\iota_{(\sigma,n,m)}$. The functor $\iota_{(\sigma,n,m)}$ induces the mapping $Inf:= Coh(\iota_{(\sigma,n,m)}, Set) : Coh(\sigma NNL(n,m), Set) \rightarrow Coh(RSpan(n,m), Set)$ via pre-composition, sending a neural network layer with the parameters $w$ and $\beta$ to the following R-span dataset 
\begin{equation}
R^n\xleftarrow[]{Id}R^n\xrightarrow[]{\sigma^m L_{(w,\beta)}}R^m \qedhere
\end{equation}
\end{proof}

The composition of neural network architectures may be found through the composition of spans via pullback. Such a definition of composition is possible in the abstract as long as a category contains a model of $RSpan(n,m)$ so it is possible to produce a lemma with slightly more generality than necessary.

\begin{lemma}[Layer Composition]
\label{lemma:composition}
Given coherent functors $\iota : RSpan(n,m) \rightarrow G$ and $\iota' : RSpan(m,k) \rightarrow H$, then there exists a coherent functor $\iota' \circ \iota : RSpan(n,k) \rightarrow H\circ G$ corresponding to the pull back of the component spans.
\end{lemma}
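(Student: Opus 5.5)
The plan is to construct $H\circ G$ as a 2-colimit in $Coh_\sim$, using Thm \ref{theorem:bicomplete}, that glues $G$ and $H$ along their shared object $R^m$. The composite span is then built inside $H\circ G$ by a pullback, and $\iota'\circ\iota$ is obtained from the universal property of $RSpan(n,k)$ as a 2-pushout, in the same way $\iota_{(\sigma,n,m)}$ was obtained in Lemma \ref{lemma:coherent_nnl}.

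\textbf{Step 1 (gluing).} Write $i_1 : Span \rightarrow RSpan(n,m)$ and $Y : Sort \rightarrow Span$, as in the construction of $RSpan(n,m)$. The codomain leg of the first span is the coherent functor $\iota i_1 Y : Sort \rightarrow G$. The domain leg of the second span is $\iota' i_1' X : Sort \rightarrow H$. Both pick out an object which every $Set$-model sends to $R^m$.

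We also want the two copies of floating point arithmetic to agree. So we additionally glue $\iota i_0 : Float \rightarrow G$ and $\iota' i_0' : Float \rightarrow H$ along $Float$. Define $H\circ G$ as the 2-colimit of the diagram
\[G \xleftarrow{\iota i_0} Float \xrightarrow{\iota' i_0'} H\]
(the $Float$-component already determines the sorts $R^m$ up to isomorphism). Let $g : G \rightarrow H\circ G$ and $h : H \rightarrow H\circ G$ be the coprojections, with an invertible 2-cell $g\iota i_0 \cong h\iota' i_0'$. Because coherent functors preserve finite products, this 2-cell induces an isomorphism $g\iota i_1 Y \cong h\iota' i_1' X$ between the two copies of $R^m$.

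\textbf{Step 2 (composite span).} In $H\circ G$ we have the two spans $g\iota i_1$, namely $A \xleftarrow{f} N \xrightarrow{t} B$, and $h\iota' i_1'$, namely $B' \xleftarrow{f'} N' \xrightarrow{t'} C$. Transport $f'$ along the isomorphism $B \cong B'$, and form the pullback $P = N\times_B N'$, which exists since $H\circ G$ is coherent. This gives a span $A \leftarrow P \rightarrow C$.

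By Def \ref{definition:category_span} and Thm \ref{theorem:schema}, this span corresponds to a coherent functor $r : Span \rightarrow H\circ G$. Its legs are isomorphic to $g\iota i_0\rho(V)^n$ and $h\iota' i_0'\rho(V)^k$. Together with the common functor $Float \rightarrow H\circ G$, this gives a pseudo-cocone over the 2-pushout diagram defining $RSpan(n,k)$. The universal property then yields $\iota'\circ\iota : RSpan(n,k) \rightarrow H\circ G$, unique up to isomorphism, whose restriction along $i_1$ is (isomorphic to) $r$, the pullback of the component spans.

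\textbf{Main obstacle.} The delicate point is checking that the cocone data really commute up to coherent invertible 2-cells. The isomorphisms supplied by Thm \ref{theorem:schema} hold only up to equivalence, and the isomorphism $B\cong B'$ has to be compatible with the $Float$-gluing. I would handle this by working throughout at the level of $Set$-models: by Thm \ref{theorem:model_categories_th}, a pseudo-cocone is determined by compatible models. I would therefore verify, for an arbitrary coherent $M : H\circ G \rightarrow Set$, that $M r$ is exactly the set-theoretic pullback composite of the two spans $Mg\iota i_1$ and $Mh\iota' i_1'$. This holds because coherent functors preserve pullbacks, and it justifies the phrase ``corresponding to the pull back of the component spans''.
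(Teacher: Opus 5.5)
Your proposal is correct and follows essentially the same route as the paper: $H\circ G$ is the 2-pushout of $G$ and $H$ under $Float$ (via $\iota i_0$ and $\iota' i_0'$), the composite span is the pullback across the two identified copies of $R^m$, and $\iota'\circ\iota$ comes from the universal property of the 2-colimit defining $RSpan(n,k)$. Your ``main obstacle'' is milder than you expect, because that defining diagram has no composable pair of non-identity arrows, so any choice of invertible 2-cells already gives a pseudo-cocone; the detour through $Set$-models is therefore unnecessary, and it would in any case need a completeness theorem for coherent logic rather than just Thm~\ref{theorem:model_categories_th}.
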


\begin{proof}
Form the following 2-colimit, by Thm \ref{theorem:bicomplete}.
\begin{equation}
\begin{tikzcd}
	Float && {RSpan(n,m)} \\
	&& G \\
	{RSpan(m,k)} & H & {H\circ G}
	\arrow[from=1-1, to=1-3]
	\arrow[from=1-1, to=3-1]
	\arrow["\iota"', from=1-3, to=2-3]
	\arrow["j"', from=2-3, to=3-3]
	\arrow["{\iota'}", from=3-1, to=3-2]
	\arrow["{j'}", from=3-2, to=3-3]
	\arrow["\ulcorner"{anchor=center, pos=0.125, rotate=180}, draw=none, from=3-3, to=1-1]
\end{tikzcd}
\end{equation}

By Def \ref{definition:category_span} and Thm \ref{theorem:schema} the composites $j\iota$ and $j'\iota'$ correspond to spans in $H\circ G$. Form the composite of spans via the following pullback.

\begin{equation}
\begin{tikzcd}
	{j\iota(N)\circ j'\iota'(N)} & {j'\iota'(N)} & {R'^k} \\
	& {R'^m} & {R^k} \\
	{j\iota(N)} & {R^m} \\
	{R^n} \\
	\arrow["{\pi_2}", from=1-1, to=1-2]
	\arrow["{\pi_1}"', from=1-1, to=3-1]
	\arrow["\ulcorner"{anchor=center, pos=0.125}, draw=none, from=1-1, to=3-2]
	\arrow["{j'\iota'(t)}", from=1-2, to=1-3]
	\arrow["{j'\iota'(f)}", from=1-2, to=2-2]
	\arrow[squiggly, tail reversed, from=1-3, to=2-3]
	\arrow[squiggly, tail reversed, from=2-2, to=3-2]
	\arrow["{j\iota(t)}"', from=3-1, to=3-2]
	\arrow["{j\iota(f)}"', from=3-1, to=4-1]
\end{tikzcd}
\end{equation}

It should be noted that as the 2-colimit commutes up to 2-cell isomorphism, the models of floating point real numbers are related by natural isomorphisms, so the endpoints of the composite span are isomorphic to $R^n$ and $R^k$ respectively.

It remains to show that this data produces a coherent functor $\iota' \circ \iota : RSpan(n,k) \rightarrow H \circ G$. Recall that $RSpan(n,k)$ is itself defined as a 2-colimit (in the proof of the $RSpan$ lemma), so a coherent functor out of $RSpan(n,k)$ corresponds to a co-cone of its defining diagram. That diagram requires a $Float$ model and a $Span$ model whose endpoints are the $n$-fold and $k$-fold self-products of the floating point object. Both are present in $H \circ G$: the $Float$ model is inherited from $G$ via $j$ and the colimit inclusion $j : G \rightarrow H \circ G$, and the composite span above, with endpoints isomorphic to $R^n$ and $R^k$, provides the required $Span$ model. These constitute a co-cone of the defining diagram of $RSpan(n,k)$, and the universal property of that 2-colimit yields the coherent functor $\iota' \circ \iota : RSpan(n,k) \rightarrow H \circ G$.
\end{proof}

Finally, with definitions for R-span datasets, dense layers, inference and composition, all through their descriptions as coherent categories, it is possible to prove the following theorem.

\begin{theorem}
\label{theorem:neural_network_representation}
For any dense floating point neural network architecture $NN(\vec{\sigma}, \vec{a})$ (Def \ref{definition:nna}) there exists a coherent functor $\iota : RSpan(a_0,a_n) \rightarrow G$ such that the $Set$ models of $G$ are neural networks of $NN(\vec{\sigma}, \vec{a})$, and such that the functor $Coh(\iota, Set): Coh(G, Set) \rightarrow Coh(RSpan(a_0,a_n), Set)$ is, on objects, the inference function (Def \ref{definition:inference}).
\end{theorem}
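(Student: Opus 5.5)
The proof will go by induction on the depth $n$ of the architecture, using the observation made before Def of Neural Network Composition: every architecture $NN(\vec{\sigma},\vec{a})$ is an iterated composite of single-layer architectures $NN((\sigma_i),(a_{i-1},a_i))$.

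\textbf{Base case $n=1$.} Take $G := \sigma_1 NNL(a_0,a_1)$ and $\iota := \iota_{(\sigma_1,a_0,a_1)}$ from Lemma \ref{lemma:coherent_nnl}. It remains to identify the $Set$-models of $G$ with pairs $(w,\beta)$. A coherent functor $M : \sigma NNL(n,m) \rightarrow Set$ is, by the 2-colimit universal property, equivalent up to isomorphism to a compatible cocone, which consists of three pieces of data. The first is a $Float\text{-}\sigma$-model $Mj$; by Thm \ref{theorem:hard_coding} it is naturally isomorphic to the hard-coded one, so $Mj\rho(V)\cong R$ with the standard $add$, $mul$ and $\sigma$. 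The second and third are $Oper$-models, which by Thm \ref{theorem:schema} are just morphisms. Compatibility along $One$ and the two $Sort$ legs forces their domains to be $\mathbf{1}$ and their codomains $(R^n)^m$ and $R^m$. Such maps are exactly a weight matrix and a bias vector. On objects, precomposition with $\iota$ then returns the span $R^n\xleftarrow{Id}R^n\xrightarrow{\sigma^m L_{(w,\beta)}}R^m$. Computed in $Set$, $L_{(w,\beta)}(x)=wx+\beta$, so this span is exactly Def \ref{definition:inference} for one layer.

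\textbf{Inductive step.} Suppose $\iota_{<n} : RSpan(a_0,a_{n-1})\rightarrow G_{<n}$ has been built for the first $n-1$ layers. Let $\iota_n := \iota_{(\sigma_n,a_{n-1},a_n)}$ with codomain $H := \sigma_n NNL(a_{n-1},a_n)$. Lemma \ref{lemma:composition} then gives $G := H\circ G_{<n}$ and $\iota := \iota_n\circ\iota_{<n} : RSpan(a_0,a_n)\rightarrow G$.

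For models, a $Set$-model of the pushout $G$ is a pair of models of $G_{<n}$ and $H$ that agree, up to isomorphism, on the shared $Float$ part. By the hard coding theorem, both restrictions are naturally isomorphic to the standard floats, so the agreement condition imposes no extra constraint. Hence models of $G$ are pairs (network of the first $n-1$ layers, layer $n$), which is $NN(\vec{\sigma},\vec{a})$.

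For inference, $Coh(\iota,Set)$ sends such a model to the pullback of the spans $R^{a_0}\xleftarrow{Id}R^{a_0}\rightarrow R^{a_{n-1}}$ and $R^{a_{n-1}}\xleftarrow{Id}R^{a_{n-1}}\rightarrow R^{a_n}$. Since one leg is an identity, the pullback is $R^{a_0}$ with the composite map $\sigma_n(w_n(\cdots)+\beta_n)$. This uses that coherent functors into $Set$ preserve the pullback.

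\textbf{Main obstacle.} The delicate step is the identification of $Set$-models up to isomorphism. Since 2-colimits in $Coh_\sim$ classify cocones only up to natural isomorphism, "the models are the networks" must be read as an equivalence of categories, or a bijection on isomorphism classes. To make that reading rigid, I would first show that every $Float$ model in $Set$ has no nontrivial automorphisms. Every element is a global element that is hard-coded, so the automorphism must be the identity. With that in hand, the isomorphism classes of cocones biject with the actual tuples $(\vec{w},\vec{\beta})$, and the inference statement holds on objects after transporting along these canonical isomorphisms.
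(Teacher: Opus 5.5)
Your proposal is correct and follows the same route as the paper: single layers come from Lemma \ref{lemma:coherent_nnl}, deeper architectures from iterating Lemma \ref{lemma:composition}, and the inference function is read off from the pullback of spans whose left legs are identities. You also fill in steps the paper's short proof leaves implicit. First, you identify $Set$-models of the 2-colimits with parameter tuples, using the cocone description together with the Hard Coding theorem. Second, you show that $Float$-models are rigid, since naturality at the hard-coded global elements forces every automorphism to be the identity, so ``the models are the networks'' holds as a bijection on isomorphism classes.
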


\begin{proof}
By Lemma \ref{lemma:coherent_nnl} this is true for all of the single layer neural networks. Any architecture may be formed via the composition of neural network layers. By Lemma \ref{lemma:composition} the corresponding coherent functors of neural network layers may be composed via pullback of their internal spans. For architectures with spans 
\begin{equation}
R^n\xleftarrow[]{Id}R^n\xrightarrow[]{\iota(t)}R^m
\end{equation} 
and
\begin{equation}
R^m\xleftarrow[]{Id}R^m\xrightarrow[]{\iota'(t)}R^k
\end{equation}
Their pullback forms the following span.
\begin{equation}
R^n\xleftarrow[]{Id}R^n\xrightarrow[]{\iota'(t)\iota(t)}R^k
\end{equation}
This is the span the composite of models is sent to under $Coh(\iota'\circ\iota, Set)$ which matches the definition of inference given in \ref{definition:inference}.
\end{proof}

Having demonstrated that a coherent category exists for any dense feed-forward floating point neural network, it is possible to increase the expressivity of this representation by incorporating weight and bias fixing and tying. These constraints, which include sparse and convolutional architectures as special cases, impose equalities between global elements of $R$ in $G$, either between weight entries, between bias entries, or between a weight or bias entry and a fixed value. Each such equality may be enforced by a 2-coequaliser construction in $Coh_\sim$, which identifies the equaliser of the two morphisms with the terminal object, forcing their equality in all $Set$-models.

\begin{theorem}
\label{corollary:weight_tying}
For any floating point neural network architecture $NN(\vec{\sigma}, \vec{a})$ with a finite collection of weight fixing and tying constraints, there exists a coherent category $H$ and coherent functor $\iota' : RSpan(a_0, a_n) \rightarrow H$ such that the $Set$-models of $H$ are exactly the neural networks of $NN(\vec{\sigma}, \vec{a})$ satisfying the constraints, and $Coh(\iota', Set) : Coh(H, Set) \rightarrow Coh(RSpan(a_0, a_n), Set)$ is, on objects, the inference function.
\end{theorem}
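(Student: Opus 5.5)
We start from Thm \ref{theorem:neural_network_representation}, which gives a coherent functor $\iota : RSpan(a_0,a_n) \rightarrow G$. Its $Set$-models are the networks of $NN(\vec{\sigma},\vec{a})$, and $Coh(\iota,Set)$ is inference on objects. The constraints are imposed one at a time, by induction on the size of the finite constraint set; the empty case is that theorem.

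\textbf{Setting up the parallel pair.} Each constraint is an equality between two global elements $u,v : \mathbf{1} \rightarrow R$ in $G$. Here $R = j\rho(V)$ is the internal float object, and each weight entry is obtained by composing $w : \mathbf{1} \rightarrow (R^{a_{i-1}})^{a_i}$ (or $\beta$) with the relevant projections.
\begin{itemize}
\item For a tying constraint, $u$ and $v$ are two such entries.
\item For a fixing constraint, $v$ is the image under $j$ of the global element $c : \mathbf{1} \rightarrow \rho(V)$ of $Float\text{-}\sigma$ corresponding to the fixed value. This global element exists by the Hard Coding theorem (Thm \ref{theorem:hard_coding}), since $\delta$ contains every global element of $F(V)=R$.
\end{itemize}
By Def \ref{definition:category_oper} and Thm \ref{theorem:schema}, each of $u,v$ corresponds to a coherent functor $Oper \rightarrow G$ sending $A\mapsto \mathbf{1}$, $B \mapsto R$ and $g$ to the element. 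We then form the 2-coequaliser $q : G \rightarrow G'$ of this parallel pair in $Coh_\sim$, which exists by Thm \ref{theorem:bicomplete}.

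\textbf{Identifying the models of $G'$.} By the universal property of 2-colimits, a coherent functor $G' \rightarrow Set$ is, up to isomorphism, a coherent $M : G \rightarrow Set$ together with a natural isomorphism $\alpha : Mu' \cong Mv'$, where $u',v'$ are the two functors out of $Oper$. The components of $\alpha$ at $A$ and $B$ are bijections $\alpha_A : M\mathbf{1} \rightarrow M\mathbf{1}$ and $\alpha_B : MR \rightarrow MR$. Since $M\mathbf{1}$ is a singleton, $\alpha_A$ is the identity.

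The key point is to show that $\alpha_B$ is forced to be the identity, so that naturality reduces to $M(u) = M(v)$. For this we invoke the Hard Coding theorem again: $MR$ is canonically identified with the fixed set $R$, compatibly with $j$. One way to secure this is to also coequalise the $Sort$-functor picking out $R$, so that the isomorphism is required to be the identity on the hard-coded float object. If that is awkward, the alternative is to work with the coequaliser of the equaliser subobject $eq(u,v) \rightarrowtail \mathbf{1}$ against $Id_{\mathbf{1}}$, as suggested in the text, identifying the equaliser with the terminal object. The latter is cleaner, because the isomorphisms between subobjects of $\mathbf{1}$ in $Set$ are unique. Either way, the models of $G'$ are exactly the models of $G$ satisfying the constraint, i.e.\ the constrained networks. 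This step is the main obstacle: one must make sure that the 2-cell data does not admit spurious models, which would happen if $\alpha_B$ could be a nontrivial permutation of $R$.

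\textbf{Inference and induction.} Set $\iota' = q\iota$. Precomposition with $\iota'$ factors as $Coh(q,Set)$ followed by $Coh(\iota,Set)$. The first functor is the inclusion of constrained networks into $Coh(G,Set)$, and the second is inference by Thm \ref{theorem:neural_network_representation}. So on objects $Coh(\iota',Set)$ is the inference function restricted to constrained networks. Iterating over the finitely many constraints, each step of the induction preserves both properties, which gives $H$ and $\iota'$.
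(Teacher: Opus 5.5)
Your primary construction does not impose the constraint, and the Hard Coding theorem cannot repair it. You were right to flag this as the main obstacle. The construction is the 2-coequaliser of the two $Oper$-functors picking out $u$ and $v$.

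A $Set$-model of that coequaliser is a pair $(M,\alpha)$ in which $\alpha$ is extra 2-cell data, and naturality only demands $\alpha_B(M(u)(\ast)) = M(v)(\ast)$. Any bijection of $M(R)$ with this property works, for example the identity or a transposition. So every network in $Coh(G,Set)$ extends to a model of $G'$, and nothing is cut down. Hard Coding only gives $Mj\rho \cong F$; it constrains $M$, not $\alpha_B$.

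Your first suggested repair also fails as phrased. 2-coequalising the $Sort$-functor $R$ with itself merely adjoins another free automorphism of $R$. Forcing $\alpha_B=\mathrm{id}$ would need a coequifier, a different 2-colimit that you would have to justify separately. So the claim that it works ``either way'' is not right.

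Your second alternative is precisely the paper's proof. Form $E := eq(u,v) \rightarrowtail \mathbf{1}$ in $G$ and 2-coequalise the functors $E,\mathbf{1} : Sort \rightarrow G$. Your $Oper$-version, $E\rightarrowtail\mathbf{1}$ against $Id_{\mathbf{1}}$, works equally well. Make this the construction from the start and state the forcing step explicitly:
\begin{itemize}
\item For a model $(M,\alpha)$ we have $M(E) = eq(Mu,Mv) \subseteq M(\mathbf{1}) = \{\ast\}$. The adjoined bijection $M(E)\cong\{\ast\}$ makes $M(E)$ nonempty, hence $Mu = Mv$.
\item Conversely, if $Mu=Mv$, the required bijection exists and is unique, so $M$ extends to $G'$ in exactly one way.
\end{itemize}
With that replacement, your remaining steps coincide with the paper's: setting $\iota' = q\iota$, factoring the precomposition, and iterating over the finitely many constraints.
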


\begin{proof}
By Thm \ref{theorem:neural_network_representation}, there exists a coherent category $G$ and coherent functor $\iota : RSpan(a_0, a_n) \rightarrow G$ such that the $Set$-models of $G$ are neural networks of $NN(\vec{\sigma}, \vec{a})$. It suffices to show that each weight or bias fixing or tying constraint may be enforced by a single 2-coequaliser step preserving the inference functor; the full result then follows by applying the construction iteratively over the finite collection of constraints, each step yielding a further coherent functor, with each 2-coequaliser existing by Thm \ref{theorem:bicomplete}.

Consider a single constraint asserting the equality of two global elements. Any weight entry $w_{ij}$ and bias entry $\beta_k$ may be expressed as global elements $\pi_{ij} \circ w : \mathbf{1} \rightarrow R$ and $\pi_k \circ \beta : \mathbf{1} \rightarrow R$ in $G$ respectively, where $\pi_{ij}$ and $\pi_k$ are the appropriate projections of the weight morphism $w : \mathbf{1} \rightarrow (R^n)^m$ and bias morphism $\beta : \mathbf{1} \rightarrow R^m$. Each constraint takes one of the following forms, in each case producing two morphisms $f, g : \mathbf{1} \rightarrow R$ in $G$.
\begin{itemize}[noitemsep]
    \item Weight tying ($w_{ij} = w_{kl}$): $f := \pi_{ij} \circ w$,\quad $g := \pi_{kl} \circ w$.
    \item Bias tying ($\beta_j = \beta_k$): $f := \pi_j \circ \beta$,\quad $g := \pi_k \circ \beta$.
    \item Weight fixing ($w_{ij} = c$): $f := \pi_{ij} \circ w$,\quad $g := c$, where $c : \mathbf{1} \rightarrow R$ is the global element corresponding to the fixed value, which exists in $G$ by Lemma \ref{lemma:float} and Thm \ref{theorem:hard_coding}.
    \item Bias fixing ($\beta_k = c$): $f := \pi_k \circ \beta$,\quad $g := c$, with $g$ as above.
\end{itemize}

Since $G$ has all finite limits, the equaliser $E := eq(f, g) \rightarrowtail \mathbf{1}$ exists in $G$. Both $E$ and $\mathbf{1}$ are objects of $G$ and therefore correspond, by Def \ref{definition:category_sort} and Thm \ref{theorem:schema}, to coherent functors $E, \mathbf{1} : Sort \rightarrow G$. By Thm \ref{theorem:bicomplete}, the 2-coequaliser of these functors exists in $Coh_\sim$. Let $G' := coeq(E, \mathbf{1})$, with coherent functor $e : G \rightarrow G'$ and 2-isomorphism
\[e(E) \cong e(\mathbf{1}) = \mathbf{1}_{G'}.\]

Since $e$ is a coherent functor it preserves equalisers, giving $e(E) \cong eq(e \circ f,\, e \circ g)$ as a subobject of $\mathbf{1}_{G'}$. Combined with the 2-isomorphism above, $eq(e \circ f,\, e \circ g) \cong \mathbf{1}_{G'}$. For any $Set$-model $M : G' \rightarrow Set$, preservation of finite limits by $M$ gives
\[M\!\left(eq(e \circ f,\, e \circ g)\right) = eq(M \circ e \circ f,\, M \circ e \circ g).\]
Since $M(\mathbf{1}_{G'}) = \{\ast\}$, this forces $M \circ e \circ f = M \circ e \circ g$, so every $Set$-model of $G'$ satisfies $f = g$. Conversely, any $Set$-model $M$ of $G$ satisfying $f = g$ has $M(E) = M(eq(f,g)) \cong M(\mathbf{1})$, so the required 2-isomorphism holds and $M$ factors through $G'$ by the universal property of the 2-coequaliser. Therefore $Set$-models of $G'$ are exactly $Set$-models of $G$ satisfying $f = g$.

After iterating the construction over all constraints, let $H$ denote the resulting coherent category and $e : G \rightarrow H$ the composite of all coequaliser maps. The coherent functor $\iota' := e \circ \iota : RSpan(a_0, a_n) \rightarrow H$ is the required interpretation, and $Coh(\iota', Set)$ is the inference functor.
\end{proof}

%% file: sections/conclusion.tex
\section{Conclusion}

This paper has demonstrated an interesting connection between the study of databases, formal theories, and machine learning. As with many areas of category theory, it is common in the category theory for machine learning literature to attempt to construct sensible categories of machine learning models. While the methodology of constructing such categories presented in this work may not generate definitively correct categories for the study of machine learning, it does provide a reasonably generic starting point and a stepping stone for future work. Particularly, as phrasing this construction through the use of formal theories demonstrates a clearer role that machine learning models provide in the storage of information as they learn, and their description as models of formal theories allow at least some of the constraints on their behaviour to be explicitly presented within the syntax of a formal logic.

There remains an open question around the presentation of machine learning algorithms in the language of category theory. However, the suggestion that machine learning models are databases and that model selection may be a Kan extension provides a philosophically satisfying characterisation of the domain. Demonstrating that the inference functor for neural networks comes directly from an extension problem in $Coh$ shows that it may be sensible to think of machine learning as a Kan extension. In particular a description of machine learning using formal theories recontextualises the process of learning itself. The theory of datasets demonstrates that there are assumptions about the data that are known to be true, or are at least assumed to be true. The theory of machine learning models, equipped with a model of datasets, is representing the addition of constraints that the data may satisfy. The theory of machine learning models is a hypothesis, a model of data with additional constraints that is believed to be more explanatory than the basic set of known statements. At a high level, the picture of learning as an extension between theories represents the process of learning as a form of hypothesis testing. Given a collection of known facts, how does the inclusion of additional hypothetical truths affect models of the data? In practice, one may test multiple different architectures and algorithms on different datasets, which from this lens would be the process of testing different hypotheses to determine whose extensions of the data are most faithful to the original observations.

Returning to a more database oriented perspective, previous observations that Kan extensions of databases are data migration operations \cite{spivak_functorial_2013}, such as join and merge, combined with the notion that machine learning may be a Kan extension between data bases representing machine learning models supports the intuitive interpretation that learning is a data migration operation. Unfortunately, one of the limitations of this work is that it does not demonstrate that the 2-morphisms of $Coh$, inherited directly as the natural transforms of $Cat$, are adequate for this interpretation. It is likely that such a naive assignment of 2-morphisms will not produce the desired Kan-extensions and that future work should either investigate alternative 2-morphisms, or possibly alternative logics to generate the desired structure.

The theorems of this work may provide some useful tools for future consideration, particularly as an indirect methodology for constructing formal theories. Thm \ref{theorem:schema} and Thm \ref{theorem:hard_coding} show that objects with known structures, such as categories and functors, may have their data lifted into a representation as a formal theory, allowing one to work with the information without being concerned about the particular sequents of the theory used to present this information. Additionally, the constructions presented in Section \ref{section:mlmodels} show how the construction of theories via colimits allow one to assert various properties of their models through the universal properties of colimits.